\numberwithin{equation}{section}
\theoremstyle{plain}
\newtheorem{theorem}{Theorem}[section]
\newtheorem{lemma}[theorem]{Lemma}
\newtheorem{proposition}[theorem]{Proposition}
\theoremstyle{definition}
\newtheorem{remark}[theorem]{Remark}
\newtheorem{example}[theorem]{Example}
\newcommand{\dN}{\mathbb{N}}
\newcommand{\dR}{\mathbb{R}}
\newcommand{\cK}{\mathcal{K}}
\newcommand{\rmd}{\mathrm{d}}
\newcommand{\olB}{\overline{B}}
\DeclareMathOperator{\supp}{supp}
\DeclareMathOperator*{\esssup}{ess\,sup}
\newcommand{\dint}{\,\rmd}
\newcommand{\ssm}{\backslash}
\newcommand{\weakto}{\rightharpoonup}
\newcommand{\lr}[3]{#1#3#2}
\newcommand{\xlr}[3]{\left#1#3\right#2}
\newcommand{\biglr}[3]{\bigl#1#3\bigr#2}
\newcommand{\abs}[1]{\lr\lvert\rvert{#1}}
\newcommand{\xabs}[1]{\xlr\lvert\rvert{#1}}
\newcommand{\norm}[1]{\lr\lVert\rVert{#1}}
\newcommand{\scp}[1]{\lr\langle\rangle{#1}}
\newcommand{\coloneqq}{:=}
\def\rn{\mathbb{R}^N}
\def\eps{\varepsilon}
\def\rh{\rightharpoonup}
\def\io{\int_{\Omega}}
\def\vp{\varphi}
\def\wt{\widetilde}
\begin{document}

\title{A concentration phenomenon for semilinear
  elliptic equations}
 
\author{Nils Ackermann\thanks{Supported by CONACYT grant 129847
    and PAPIIT grant IN106612 (Mexico)} \and Andrzej
  Szulkin\addtocounter{footnote}{5}\thanks{Supported in part by
    the Swedish Research Council}}
\date{}
\maketitle
\begin{abstract}
  For a domain $\Omega\subset\dR^N$ we consider the equation $
  -\Delta u + V(x)u = Q_n(x)\abs{u}^{p-2}u$ with zero Dirichlet
  boundary conditions and $p\in(2,2^*)$.  Here $V\ge 0$ and $Q_n$
  are bounded functions that are positive in a region contained
  in $\Omega$ and negative outside, and such that the sets
  $\{Q_n>0\}$ shrink to a point $x_0\in\Omega$ as $n\to\infty$.
  We show that if $u_n$ is a nontrivial solution corresponding to
  $Q_n$, then the sequence $(u_n)$ concentrates at $x_0$ with
  respect to the $H^1$ and certain $L^q$-norms.  We also show
  that if the sets $\{Q_n>0\}$ shrink to two points and $u_n$ are
  ground state solutions, then they concentrate at one of these
  points.

\medskip
\textbf{Keywords:} Semilinear elliptic equation, indefinite nonlinearity, concentration of solutions, ground states. 

\textbf{2010 Mathematics Subject Classification:} Primary 35J61;
Secondary 35Q55, 35Q60, 35B30, 35J20.
  \end{abstract}
  
  \section{Introduction}
  \label{sec:introduction}

  Let $\Omega\subset \rn$ be a domain and consider the problem
  \begin{equation} \label{Eq} -\Delta u + V(x)u =
    Q(x)\abs{u}^{p-2}u, \quad u\in H^1_0(\Omega),
  \end{equation}
  where $H^1_0(\Omega)$ is the usual Sobolev space. Suppose
  $V,Q\in L^\infty(\Omega)$, $V\ge 0$ and $2<p<2^*$, where $2^*
  := 2N/(N-2)$ if $N\ge 3$, $2^* := \infty$ if $N=1$ or 2. If
  $\Omega$ is unbounded, assume in addition that $0$ is not in
  the spectrum of $-\Delta+V$ (i.e., $\sigma(-\Delta+V)\subset
  (0,\infty)$; this condition is automatically satisfied for
  bounded $\Omega$). Multiplying \eqref{Eq} by $u$ and
  integrating over $\Omega$ it follows immediately that $u=0$ is
  the only solution if $Q\le 0$. On the other hand, if $Q>0$ on a
  bounded set of positive measure, then it is easy to see that
  there exists a solution $u\ne 0$ to \eqref{Eq}. This will be
  shown in the next section and is in principle well known,
  cf.~\cite[Theorem~6]{MR96i:35033}. Assume without loss of
  generality that $0\in\Omega$ and let $Q=Q_n$ be such that
  $Q_n>0$ on the ball $B_{1/n}(0)$ and $Q_n<0$ on
  $\Omega\setminus B_{2/n}(0)$. For each $n$ there exists a
  solution $u_n\ne 0$, and in view of the discussion above it is
  natural to ask what happens with $u_n$ as $n\to\infty$. It is
  the purpose of this paper to show that the functions $u_n$
  concentrate at $x=0$. This concentration phenomenon does not
  seem to be earlier known.

  There is also another aspect of equation \eqref{Eq}, related to
  physics, or more specifically, to the propagation of
  electromagnetic waves which in our case is monochromatic light
  travelling through an optical cable (waveguide).  The transport
  of light in dielectric media is controlled by Maxwell's
  equations (ME) and an important role is played by the
  dielectric response $\varepsilon$ which may vary with location
  and light intensity, see e.g.~\cite{MR1079182}.  In the
  following denote by $\omega>0$ the frequency of light and by
  $c$ the speed of light in a vacuum, and put $\wt\eps =
  \frac{\omega^2}{c^2}\varepsilon$ for convenience.

  Our equation \eqref{Eq} is inspired by two models of optical
  waveguides~\cite[pp.~67--68]{Buryak200263}.  The first model
  concerns a stratified medium in $\dR^3$ consisting of slabs of
  dielectric materials that are perpendicular to the $x_1$-axis.
  Here we assume that the light beam is a wave travelling in the
  direction of $x_3$, having polarization in the direction of
  $x_2$, and $\wt\varepsilon$ is a function of $x_1$ and
  $\abs{u}^2$.  With the ansatz $E(x,t) = u(x_1)\cos(kx_3-\omega
  t)e_2$ for the electric field, where $e_2$ is the unit vector
  in the direction of $x_2$ and $k>0$ is the wave number, one
  obtains a guided solution of ME in the form of a plane
  travelling wave if and only if $u\in H^1(\dR)$ is a solution of
  the equation
  \begin{equation}
    \label{eq:12}
    -u''+\biglr(){k^2-\wt\varepsilon(x_1,\abs{u}^2)}u=0
    \qquad\text{in } \dR,
  \end{equation}
  see \cite{MR1245069,MR2361577} and the references there.  The
  total energy per unit length in $x_3$ of the wave is finite on
  each plane $\{x_2\equiv\mathrm{const.}\}$.  Note how the
  $x_1$-dependence of $\varepsilon$ exhibits the geometry of the
  waveguide.  We remark that here and in what follows there is no
  term $i\partial u/\partial x_3$ which appears in
  \cite{Buryak200263}. The reason is that unlike in
  \cite{Buryak200263} we always assume that $u$ is independent
  of~$x_3$.

  In the second model we assume
  $\wt\varepsilon=\wt\varepsilon(x_1,x_2,\abs{u}^2)$ and make the
  ansatz $E(x,t)= u(x_1,x_2)\cos(kx_3-\omega t)e_2$, the
  so-called \emph{scalar approximation} for a linearly polarized
  wave propagating in the $x_3$-direction.  Here one requires
  $u\in H^1(\dR^2)$ to be a solution of
  \begin{equation}
    \label{eq:14}
    -\Delta u+\biglr(){k^2-\wt\varepsilon(x_1,x_2,\abs{u}^2)}u=0
    \qquad\text{in } \dR^2.
  \end{equation}
  This ansatz does not yield solutions to ME, but it is
  nevertheless studied extensively in the relevant literature,
  cf.~\cite[p.\ 87]{Buryak200263}, \cite{MR519654,MR1079182} and
  the references given there.  In this case the total energy per
  unit length in $x_3$ of the wave is finite on $\dR^2$. One may
  also assume cylindrical symmetry, i.e., one puts $\wt\eps =
  \wt\eps(r,\abs{u}^2)$ and looks for solutions of the form
  $u=u(r)$, where $r^2=x_1^2+x_2^2$.

  In a nonlinear medium $\wt\varepsilon$ has a nontrivial
  dependence on $\abs{u}^2$.  The approximation
  \begin{equation*}
    \wt\varepsilon(x,\abs{u}^2) 
    = A(x)+
    Q(x)\abs{u}^{p-2}
  \end{equation*}
  is commonly used as long as $\abs{u}$ is not too large, so our
  equation \eqref{Eq} is the direct analogue of \eqref{eq:12} or
  \eqref{eq:14} in arbitrary dimension, with $V\coloneqq k^2-A$.
  This approximation is called the Kerr nonlinearity if $p=4$ and
  plays an important role in the physics
  literature~\cite{MR1245069}.  However, also $p\ne 4$ is of
  interest (non-Kerr-like materials), as are dielectric response
  functions corresponding to saturation (which occurs when
  $\abs{u}$ becomes large), see \cite[note added in
  proof]{MR1079182}, \cite{MR1079182} and the references
  there. In this latter case the response is of the form
  $A(x)+Q(x)g(\abs{u})$, with $g(0)=0$, $g$ increasing and
  $\lim_{\abs{u}\to\infty}g(\abs{u})$ finite. This leads to the
  right-hand side $Q(x)g(\abs{u})u$ in \eqref{Eq}. The part of
  the medium where $Q>0$ is called self-focusing (the dielectric
  response increases with $\abs{u}$) and the part where $Q<0$ is
  called defocusing. So if $Q>0$ on a set of small size, the
  medium has a self-focusing core and is defocusing outside of
  this core.

  It is common to consider materials separately with $Q$ positive
  or negative, see e.g. \cite[Eq.~(48)]{Buryak200263}, which
  corresponds to investigating the existence of bright ($Q>0$) or
  dark ($Q<0$) solitons.  However, also materials with
  sign-changing $Q$ are considered. In this vein, see
  \cite{RevModPhys.83.247}, or \cite[Eq.~(3)]{PhysRevA.83.033828}
  for an example where a sharp localization of the self-focusing
  region is considered. There is also recent evidence that
  materials with a large range of prescribed optical properties
  can be
  created~\cite{Veselago:1968,Pendry23062006,shalaev_optical_2007,Smith06082004}.
  Therefore it is reasonable to prescribe the nonlinear
  dielectric response almost at will for each material.

  The conditions we impose on the functions $Q_n$ allow to model
  a composite of two materials where the size of the
  self-focusing core decreases as $n\to\infty$.  In particular,
  we show for the plane travelling waves introduced above by way
  of \eqref{eq:12} and for the Kerr nonlinearity that the field
  $E$ concentrates on the $x_1$-axis in the sense of the $H^1$-
  and $L^q$-norms for all $q>1$ as $n\to\infty$, see
  Theorem~\ref{prop:conc-lq-norm} and Remark~\ref{rem:kerr}.
  Concerning the scalar approximation \eqref{eq:14} we obtain
  concentration at $(x_1,x_2)=(0,0)$ in $H^1$ and $L^q$ for every
  $q>2$ as $n\to\infty$ but not in the physically relevant case
  $q=2$. We do not know whether concentration in $L^2$ occurs
  here.

  There are numerous rigorous mathematical results on the effect
  of a sign changing $Q$ on the existence and properties of
  solutions of \eqref{Eq}. E.g. in \cite{MR2531172,MR2143511} $Q$
  takes the form $a_+-\mu a_-$ with $a_\pm\ge0$ continuous
  functions and $\mu\to\infty$.  A similar analysis for $Q=\delta
  a_+-a_-$ and $\delta\to0$ is contained in \cite{MR1615999}.
  Similarly as in our results the relative contribution of the
  negative and the positive part of $Q$ varies with a changing
  parameter.  Observe though that the change there occurs in the
  values of $Q$ while the regions where $Q>0$ and $Q<0$ are
  fixed.  The only result we are aware of that deals with
  changing the set $\{x: Q(x)>0\}$ through a parameter is
  \cite{MR1675283}.  In that paper a small region of diameter
  $\delta>0$ with $Q\equiv0$ is enclosed in a region where $Q>0$,
  and the behaviour as $\delta\to0$ is considered.  Nevertheless,
  this is different from our case, where a region with $Q<0$
  encloses a core with $Q>0$.

  \medskip

  Now we formulate our assumptions in a precise manner. Let
  $\Omega$ be a domain in $\rn$ and assume without loss of
  generality that $0\in\Omega$. $\Omega$ may be unbounded and we
  do not exclude the case $\Omega=\rn$. We will be concerned with
  the problem
  \begin{equation}
    \label{eq:21}\tag{$\mathrm{P}_n$}
    \left\{
      \begin{gathered}
        -\Delta u+V_n(x) u =Q_n(x)\abs{u}^{p-2}u, \quad x\in\Omega\\
        u(x)=0 \text{ as } x\in\partial\Omega, \quad u(x)\to 0
        \text{ as } \abs{x}\to\infty,
      \end{gathered}
    \right.
  \end{equation}
  where $p\in(2,2^*)$. Of course, the first condition in the
  second line of \eqref{eq:21} is void if $\Omega=\rn$ and the
  second condition is void if $\Omega$ is bounded.  We make the
  following assumptions concerning $V_n$ and $Q_n$:

  \begin{enumerate}[label=(A\arabic{*})]
  \item \label{item:3} $V\in L^\infty(\Omega)$, $V\ge 0$ and
    $\sigma(-\Delta+V)\subset(0,\infty)$, where the spectrum
    $\sigma$ is realized in $H^1_0(\Omega)$. $V_n = V+K_n$, where
    $K_n\in L^\infty(\Omega)$, and there exists a constant $B$
    such that $\norm{K_n}_\infty \le B$ for all $n$. Moreover,
    for each $\eps>0$ there is $N_\eps$ such that $\supp
    K_n\subset B_\eps(0)$ whenever $n\ge N_\eps$.
  \item \label{item:4} $Q_n\in L^\infty(\Omega)$, $Q_n>0$ on a
    set of positive measure and there exists a constant $C$ such
    that $\norm{Q_n}_\infty\le C$ for all $n$. Moreover, for each
    $\eps>0$ there exist constants $\delta_\eps>0$ and $N_\eps$
    such that $Q_n\le -\delta_\eps$ whenever $x\notin B_\eps(0)$
    and $n\ge N_\eps$.
  \end{enumerate}

  The following are two typical examples of $Q_n$ which we have
  in mind.
  \begin{example}
    \begin{enumerate}[label=\textup{(\alph{*})}]
    \item We let $\eps_n\to 0$ and
      \begin{equation*}
        Q_n(x) := \left\{
          \begin{array}{rl}
            1, & \abs{x}<\eps_n\\
            -1, & \abs{x}>\eps_n.
          \end{array} \right.
      \end{equation*}
    \item Let $Q$ be a bounded continuous function such that
      $Q(x)<Q(0)$ for all $x\ne 0$ and the diameter of the set
      $\{x: \lambda\le Q(x)\le Q(0)\}$ tends to 0 as
      $\lambda\nearrow Q(0)$. Put $Q_n(x):=Q(x)-\lambda_n$, where
      $\lambda_n\nearrow Q(0)$ as $n\to\infty$.
    \end{enumerate}
  \end{example}

\begin{remark}
  As we shall see, the property \ref{item:4} is the one which
  causes concentration. Concerning \ref{item:3}, we do not
  exclude the case of $K_n=0$, i.e., $V_n=V$ for all $n$.
\end{remark}

Let $E :=H^1_0(\Omega)$. According to \ref{item:3},
\begin{equation*}
  \norm{u} := \left(\io (\abs{\nabla u}^2+Vu^2)\,dx\right)^{1/2}
\end{equation*}
is an equivalent norm in $E$. The notation $\norm{\cdot}$ will
always refer to this norm. We also set
\begin{gather*}
  \norm{u}_n := \left(\io (\abs{\nabla u}^2+V_nu^2)\,dx\right)^{1/2}, \\
  \abs{u}_{q,A}:=\xlr(){\int_A\abs{u}^q\,dx}^{1/q},
\end{gather*}
$\abs{u}_{\infty,A} := \esssup_A\abs{u}$, and we abbreviate
$\abs{u}_{q,\Omega}$ to $\abs{u}_q$.  For $r>0$ and $a\in\rn$, we
put
\begin{equation*}
  B_r(a)\coloneqq\{x\in\dR^N: \abs{x-a}<r\}.
\end{equation*}
Weak convergence will be denoted by ``$\,\rh\,$''.

\medskip

In Section \ref{sec:conc-h1-norm} we show that \eqref{eq:21} has
a ground state solution and that any sequence of solutions
$(u_n)$ to \eqref{eq:21} concentrates at the origin in the $H^1$-
and the $L^p$-norm. In Section \ref{sec:conc-lq-norm}
concentration in the $L^q$-norms for different $q$ is considered
and in Section \ref{sec:conc-sever-regi} it is shown that if
$Q_n$ is positive in a neighbourhood of a finite number of
points, then ground states concentrate at one of these points.

\section{Concentration in the $H^1$- and $L^p$-norms}
\label{sec:conc-h1-norm}

\begin{proposition} \label{norm} For all $n$ large enough,
  $\norm{\cdot}_n$ is a uniformly equivalent norm in $E$, i.e.,
  there exist constants $c_1,c_2>0$ and $N_0\ge 1$ such that
  \begin{equation*}
    c_1\norm{u} \le \norm{u}_n \le c_2\norm{u} \quad \text{for all }u\in E
    \text{ and } n\ge N_0.
  \end{equation*}
\end{proposition}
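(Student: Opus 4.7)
The key identity is
\[
  \|u\|_n^2 - \|u\|^2 = \int_\Omega K_n\,u^2\,dx,
\]
so by \ref{item:3} it suffices to control $\int_\Omega |K_n|\,u^2\,dx$ by a small multiple of $\|u\|^2$, uniformly in $u\in E$, once $n$ is large. The plan is therefore to exploit the fact that $\supp K_n\subset B_\eps(0)$ for $n\ge N_\eps$, so that only the $L^2$-mass of $u$ on a small ball matters, and that $L^2$-mass on a small ball is controlled by the Sobolev norm via Hölder's inequality.

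More concretely, I would fix some $q\in(2,2^*)$. By the Sobolev embedding $E\hookrightarrow L^q(\Omega)$, there is a constant $S$ (independent of $u$) such that $|u|_q\le S\|u\|$. Applying Hölder's inequality with exponents $q/2$ and $q/(q-2)$ on $B_\eps(0)$ gives
\[
  \int_{B_\eps(0)} u^2\,dx \;\le\; |B_\eps(0)|^{1-2/q}\,|u|_q^2
  \;\le\; C_N\,\eps^{\alpha}\,S^2\,\|u\|^2,
\]
where $\alpha = N(1-2/q)>0$ and $C_N$ depends only on $N$ and $q$. Combining this with $\|K_n\|_\infty\le B$ and the identity above, for $n\ge N_\eps$ we obtain
\[
  \bigl|\,\|u\|_n^2-\|u\|^2\,\bigr| \;\le\; B\,C_N\,S^2\,\eps^\alpha\,\|u\|^2.
\]

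Now I would simply choose $\eps>0$ so small that $B\,C_N\,S^2\,\eps^\alpha\le 1/2$ and set $N_0\coloneqq N_\eps$. Then for all $n\ge N_0$,
\[
  \tfrac12\,\|u\|^2 \;\le\; \|u\|_n^2 \;\le\; \tfrac32\,\|u\|^2
  \qquad\text{for every } u\in E,
\]
which gives the desired constants $c_1=1/\sqrt{2}$ and $c_2=\sqrt{3/2}$.

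There is no real obstacle here: the whole statement rests on the shrinking-support condition in \ref{item:3} together with the $L^2\hookrightarrow L^q$ improvement on small balls that follows from Hölder plus Sobolev. The only point to double-check is the admissible range of $q$ in low dimensions, but since $2<p<2^*$ and the problem a priori forces the Sobolev embedding into some $L^q$ with $q>2$ (using $H^1_0\hookrightarrow L^\infty_{\rmloc}$ if $N=1$ and $H^1_0\hookrightarrow L^q_{\rmloc}$ for all $q<\infty$ if $N=2$), a fixed $q\in(2,2^*)$ works in all dimensions.
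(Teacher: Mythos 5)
Your proof is correct and follows essentially the same route as the paper: both control $\int_\Omega K_n u^2\,dx$ (the paper via the bilinear form $\scp{\cK_n u,v}$) by Hölder's inequality on the small ball $B_\eps(0)$ supporting $K_n$ together with the Sobolev embedding, and then choose $\eps$ small so the perturbation is a small multiple of $\norm{u}^2$. The only cosmetic difference is that you work with the quadratic form directly and make the constants explicit.
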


In what follows we always assume $n$ is so large that the
conclusion of this proposition holds.

\begin{proof}
  Let $\cK_n: E\to E$ be the linear operator given by
  \begin{equation*}
    \scp{\cK_nu, v} := \io K_nuv\,dx.
  \end{equation*}
  Using the H\"older and Sobolev inequalities we see that for
  each $\eps>0$ there is $N_\eps$ such that
  \begin{align*}
    \xabs{\scp{ \cK_nu,v}} \le
    C_1\int_{B_\eps(0)}\abs{uv}\,dx
    & \le C_1\abs{B_\eps(0)}^{(q-2)/q}\abs{u}_q\abs{v}_q \\
    & \le C_2\abs{B_\eps(0)}^{(q-2)/q}\norm{u}\,\norm{v} \quad \text{for
      all } n\ge N_\eps,
  \end{align*}
  where $q=2^*$ if $N\ge 3$, $q>2$ if $N=1$ or 2, $\abs{B_\eps(0)}$
  denotes the measure of $B_\eps(0)$ and $C_1,C_2$ are constants
  independent of $\eps$ and $n$. Now the conclusion easily
  follows by taking $\eps$ small enough.
\end{proof}

Next we prove our main existence result for \eqref{eq:21}.

\begin{theorem} \label{thm21} Suppose that $V_n$ and $Q_n$
  satisfy \ref{item:3}, \ref{item:4} above and
  $p\in(2,2^*)$. Then for all sufficiently large $n$ problem
  \eqref{eq:21} has a positive ground state solution $u_n\in
  E$. Moreover, there exists a constant $\alpha>0$, independent
  of $n$, such that $\norm{u_n}\ge\alpha$.
\end{theorem}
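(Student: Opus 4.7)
The approach is to obtain $u_n$ as a minimizer of the action functional
\[
  J_n(u) := \tfrac{1}{2}\norm{u}_n^2 - \tfrac{1}{p}\io Q_n\abs{u}^p\dint x
\]
on the Nehari manifold
\[
  \cN_n := \biglr\{\}{u\in E\setminus\{0\} : \norm{u}_n^2 = \io Q_n\abs{u}^p\dint x}.
\]
On $\cN_n$ one has $J_n = (\tfrac{1}{2}-\tfrac{1}{p})\norm{\cdot}_n^2$, and since every nontrivial critical point of $J_n$ lies in $\cN_n$, a minimizer that is also a critical point is automatically a ground state. Assumption \ref{item:4} gives $\meas\{Q_n>0\}>0$, so picking $\vp\in C_c^\infty(\Omega)$ supported where $Q_n>0$ and rescaling produces an element of $\cN_n$; hence $\cN_n\ne\emptyset$. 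Combining the Nehari identity with the Sobolev embedding and Proposition~\ref{norm}, for $u\in\cN_n$ and $n\ge N_0$ we get
\[
  \norm{u}_n^2 = \io Q_n\abs{u}^p\dint x \le C\abs{u}_p^p \le C_1\norm{u}_n^p,
\]
so $\norm{u}_n \ge \alpha_0>0$ uniformly in $n$. Via Proposition~\ref{norm} this yields the claimed uniform bound $\norm{u_n}\ge\alpha$ and also $c_n := \inf_{\cN_n} J_n \ge (\tfrac{1}{2}-\tfrac{1}{p})\alpha_0^2>0$.

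\textbf{Attainment of $c_n$.} Let $(v_k)\subset\cN_n$ be a minimizing sequence; boundedness of $\norm{v_k}_n$ yields a subsequence $v_k\rh v$ in $E$. Fix $\eps>0$ small; by \ref{item:4}, for $n$ sufficiently large $Q_n\le -\delta_\eps$ on $\Omega\setminus B_\eps(0)$, so
\[
  \alpha_0^2 \le \norm{v_k}_n^2 = \int_{B_\eps(0)}Q_n\abs{v_k}^p\dint x + \int_{\Omega\setminus B_\eps(0)}Q_n\abs{v_k}^p\dint x \le C\int_{B_\eps(0)}\abs{v_k}^p\dint x.
\]
Since $B_\eps(0)\cap\Omega$ is bounded, Rellich--Kondrachov forces $v_k\to v$ in $L^p(B_\eps(0))$, so $v\ne 0$. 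Combining the strong $L^p(B_\eps(0))$ convergence with Fatou's lemma applied to the nonnegative integrands $-Q_n\abs{v_k}^p$ on $\Omega\setminus B_\eps(0)$, one obtains
\[
  \io Q_n\abs{v}^p\dint x \ge \limsup_{k\to\infty}\io Q_n\abs{v_k}^p\dint x = \lim_{k\to\infty}\norm{v_k}_n^2 \ge \norm{v}_n^2,
\]
where the last inequality is weak lower semicontinuity of $\norm{\cdot}_n^2$. Hence there is a unique $t\in(0,1]$ with $tv\in\cN_n$, and
\[
  c_n \le J_n(tv) = t^2\biglr(){\tfrac{1}{2}-\tfrac{1}{p}}\norm{v}_n^2 \le \biglr(){\tfrac{1}{2}-\tfrac{1}{p}}\lim_{k\to\infty}\norm{v_k}_n^2 = c_n,
\]
forcing $t=1$, so $v\in\cN_n$ is a minimizer.

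\textbf{Critical point, positivity, main obstacle.} A standard Lagrange-multiplier computation on the $C^1$ constraint $G_n(u) := \norm{u}_n^2 - \io Q_n\abs{u}^p\dint x$, exploiting $G_n'(v)v = -(p-2)\norm{v}_n^2<0$, shows $J_n'(v)=0$, so $v$ solves \eqref{eq:21}. Replacing $v$ by $\abs{v}\in\cN_n$ (still a minimizer, with $J_n(\abs{v})=c_n$) produces a nonnegative ground-state solution; the strong maximum principle, applicable because the zero-order term is locally bounded, upgrades it to a strictly positive $u_n$, and standard elliptic bootstrap supplies the decay at infinity when $\Omega$ is unbounded. \emph{The main difficulty} is that in the sign-changing, possibly unbounded setting, $\io Q_n\abs{u}^p\dint x$ is not weakly continuous; assumption \ref{item:4} is exploited in two complementary ways to circumvent this, yielding Rellich compactness on the bounded set where $Q_n$ is positive, and furnishing a uniformly negative weight on the complement so that Fatou's lemma provides the correct one-sided control.
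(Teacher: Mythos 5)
Your proposal is correct and takes essentially the same route as the paper: the paper minimizes the scale-invariant quotient $\norm{u}_n^2/\bigl(\io Q_n\abs{u}^p\,dx\bigr)^{2/p}$, which is equivalent to your Nehari-manifold minimization, and its compactness argument is the identical combination of Rellich--Kondrachov on a small ball around the origin with Fatou's lemma on the region where $Q_n\le-\delta_\eps$, together with the same Sobolev-based lower bound \eqref{bound} giving $\norm{u_n}\ge\alpha$ and the same $\abs{v}$-replacement plus strong maximum principle for positivity. The differences (Nehari packaging versus the normalization $\io Q_n\abs{v_k}^p\,dx=1$) are cosmetic.
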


\begin{proof}
  Let $J_n(v):= \io Q_n\abs{v}^p\,dx$ and
  \begin{equation*}
    s_n := \inf_{J_n(v)>0}\frac{\norm{v}_n^2}{J_n(v)^{2/p}} \equiv
    \inf_{J_n(v)>0}\frac{\io(\abs{\nabla v}^2+V_nv^2)\,dx}{\left(\io
        Q_n\abs{v}^p\,dx\right)^{2/p}}.
  \end{equation*}
  If the infimum is attained at $v_n$, then it follows via the
  Lagrange multiplier rule that $u_n=c_nv_n$ is a solution of
  \eqref{eq:21} for an appropriate $c_n>0$. Moreover, since $v_n$
  may be replaced by $\abs{v_n}$, we may assume $v_n\ge 0$ (and hence
  $u_n\ge 0$). To show that $u_n>0$, we note that $u_n$ satisfies
  \begin{equation*}
    -\Delta v + (V(x)+Q_n^-(x)u_n(x)^{p-2})v =
    Q_n^+(x)u_n(x)^{p-1}\ge 0,
  \end{equation*}
  where $Q_n^\pm(x) := \max\{\pm Q_n(x),0\}$. Since
  $V(x)+Q_n^-(x)u_n(x)^{p-2}\ge 0$, it follows from the strong
  maximum principle (see e.g.\ \cite[Theorem~8.19]{MR737190})
  that $v_n>0$ (in fact it can be shown that all ground states
  have constant sign).

  If $u_n\ne 0$ is a solution to \eqref{eq:21}, then, multiplying
  the equation by $u_n$, integrating by parts and using the
  Sobolev inequality, we obtain
  \begin{equation} \label{bound} \norm{u_n}_n^2 = \io Q_n\abs{u_n}^p\,dx
    \le C_1\abs{u_n}_p^p \le C_2\norm{u_n}_n^p,
  \end{equation}
  hence according to Proposition \ref{norm}, $\norm{u_n}\ge\alpha$
  for some $\alpha>0$ and all large~$n$.

  It remains to show that the infimum is attained. Let $(v_k)$ be
  a minimizing sequence for $s_n$, normalized by
  $J_n(v_k)=1$. Then $\norm{v_k}_n$ is bounded, so we may assume
  passing to a subsequence that $v_k\rh v$ in $E$ and $v_k(x)\to
  v(x)$ a.e.\ in $\Omega$. Since the norm is lower semicontinuous
  and $Q_n<0$ on $\abs{x}>1$ for $n$ large, it follows from the
  Rellich-Kondrachov theorem and Fatou's lemma (applied on the
  set $\abs{x}>1$) that
  \begin{align*}
    s_n = \lim_{k\to\infty}\norm{v_k}_n^2 & =
    \lim_{k\to\infty}\frac{\norm{v_k}_n^2}{\left(\int_{\abs{x}<1}Q_n\abs{v_k}^p\,dx
        +\int_{\abs{x}>1}Q_n\abs{v_k}^p\,dx\right)^{2/p}} \\
    & \ge \frac{\norm{v}_n^2}{J_n(v)^{2/p}} \ge s_n.
  \end{align*}
  Thus $v$ is a minimizer.
\end{proof}

Note that the only properties of $V_n$ and $Q_n$ which are
essential in this proof are that $\norm{\cdot}_n$ is a norm, $Q_n\in
L^\infty(\Omega)$, $Q_n>0$ on a set of positive measure and
$Q_n(x)\le 0$ for all $\abs{x}$ large enough.

\begin{remark} \label{rem25}
  \begin{enumerate}[label=\textup{(\alph{*})}]
  \item We see from \eqref{bound} that $\norm{u}\ge\alpha$ for any
    nontrivial solution $u$ of \eqref{eq:21} provided $n$ is
    large enough.
  \item Since the Krasnoselskii genus of the manifold $J_n(v)=1$
    is infinite and the functional $v\mapsto \io Q_n^+\abs{v}^p\, dx$
    is weakly continuous, it is not difficult to see using
    standard minimax methods that \eqref{eq:21} has infinitely
    many solutions. Since we shall not use this result, we leave
    out the details.
  \item The observation that $Q<0$ outside a large ball implies
    compactness (and thus existence of solutions) seems to go
    back to \cite{MR1861096}.
  \end{enumerate}
\end{remark}

In the sequel suppose for each $n$ that $u_n$ is a nontrivial
solution of \eqref{eq:21} and set $w_n:=u_n/\norm{u_n}_n$.

\begin{lemma} \label{lem22} $\norm{u_n}\to\infty$ as $n\to\infty$.
\end{lemma}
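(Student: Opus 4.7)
The plan is to argue by contradiction. Suppose that, along a subsequence still denoted $(u_n)$, one has $\norm{u_n}\le M$ for some $M>0$; then by Proposition~\ref{norm} also $\norm{u_n}_n\le c_2M$ for all large $n$. The aim is to show that under this hypothesis $\norm{u_n}_n$ must in fact tend to zero, in contradiction with the uniform lower bound $\norm{u_n}_n\ge c_1\alpha$ coming from Remark~\ref{rem25}(a) and Proposition~\ref{norm}.

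Testing \eqref{eq:21} with $u_n$ and integrating by parts yields the basic identity
\[
  \norm{u_n}_n^2 = \io Q_n\abs{u_n}^p\,dx.
\]
Fix $\eps>0$ and split this integral over $B_\eps(0)$ and its complement. By \ref{item:4}, for $n\ge N_\eps$ one has $Q_n\le -\delta_\eps$ on $\Omega\setminus B_\eps(0)$, so the contribution from the complement is nonpositive and may simply be dropped. Using $\norm{Q_n}_\infty\le C$, H\"older's inequality with exponent $q/p>1$ (take $q=2^*$ if $N\ge 3$ and any $q>p$ if $N\le 2$), and the Sobolev embedding $E\hookrightarrow L^q(\Omega)$, one obtains
\[
  \norm{u_n}_n^2 \le C\int_{B_\eps(0)}\abs{u_n}^p\,dx \le C\abs{B_\eps(0)}^{1-p/q}\abs{u_n}_q^p \le C'\eps^{N(1-p/q)}\norm{u_n}^p \le C'M^p\eps^{N(1-p/q)}
\]
for all $n\ge N_\eps$, where $C'$ is independent of both $\eps$ and $n$.

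Since the exponent $N(1-p/q)$ is strictly positive, first letting $n\to\infty$ with $\eps$ fixed and then $\eps\to 0$ forces $\norm{u_n}_n\to 0$, which is the desired contradiction. Hence no subsequence of $(\norm{u_n})$ can stay bounded, and $\norm{u_n}\to\infty$.

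The only delicate point is the uniform control of the positive part of $\io Q_n\abs{u_n}^p\,dx$: assumption \ref{item:4} pins the positivity set of $Q_n$ to an arbitrarily small ball once $n$ is large, and the Sobolev embedding then crushes the integral there at a fixed polynomial rate in the radius, independently of~$n$. The strict negativity $Q_n\le-\delta_\eps$ outside $B_\eps(0)$ is what makes it legitimate to simply discard the outer contribution; merely assuming $Q_n\le 0$ there would not permit dropping a potentially large positive mass of $\abs{u_n}^p$ that survives weak convergence, and the argument would have to track a subtle cancellation instead.
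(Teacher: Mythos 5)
Your proof is correct, and it follows the same skeleton as the paper's: assume a bounded subsequence, test \eqref{eq:21} with $u_n$ to get $\norm{u_n}_n^2=\io Q_n\abs{u_n}^p\,dx$, discard the nonpositive contribution from $\Omega\setminus B_\eps(0)$ (valid for $n\ge N_\eps$ by \ref{item:4}), show the contribution from $B_\eps(0)$ is small, and contradict the lower bound $\norm{u_n}\ge\alpha$ from Remark~\ref{rem25}(a). The one genuine difference is how smallness on the small ball is obtained: the paper passes to a weak limit $u_n\weakto u$ and uses Rellich--Kondrachov to get $\int_{\abs{x}<\eps}\abs{u_n}^p\,dx\to\int_{\abs{x}<\eps}\abs{u}^p\,dx$, then sends $\eps\to0$ via absolute continuity of the integral of $\abs{u}^p$; you instead use H\"older with exponent $q/p$ ($q=2^*$ or $q>p$ for $N\le 2$) plus the Sobolev embedding to get the uniform bound $C'M^p\eps^{N(1-p/q)}$. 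Your version is more elementary and quantitative — no compactness, no extraction of a weak limit beyond the bounded subsequence — at the cost of nothing, since the exponent $N(1-p/q)$ is indeed positive precisely because $p<2^*$ (resp.\ $q>p$ can be chosen when $N\le2$). One small quibble with your closing commentary: dropping the outer integral requires only $Q_n\le 0$ on $\Omega\setminus B_\eps(0)$, not the strict bound $Q_n\le-\delta_\eps$; a nonpositive term can always be discarded when proving an upper bound, so that remark overstates what the strict negativity is needed for (it matters elsewhere in the paper, e.g.\ in Lemma~\ref{lem23} and \eqref{eq21}, but not here).
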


\begin{proof}
  Assuming the contrary, $u_n\rh u$ in $E$ and $u_n\to u$ in
  $L^p_{loc}(\Omega)$ after passing to a subsequence. Multiplying
  \eqref{eq:21} (with $u=u_n$) by $u_n$, integrating and using
  the fact that $Q_n<0$ for each $\eps>0$ and $n\ge N_\eps$, we
  obtain
  \begin{align*}
    \limsup_{n\to\infty} \norm{u_n}_n^2
    & = \limsup_{n\to\infty}\io Q_n\abs{u_n}^p\,dx\\
    & \le \limsup_{n\to\infty}\int_{\abs{x}<\eps} Q_n\abs{u_n}^p\,dx \le
    C \int_{\abs{x}<\eps}\abs{u}^p\,dx.
  \end{align*}
  Letting $\eps\to 0$ and using Proposition \ref{norm}, we see
  that $u_n\to 0$ in $E$, a contradiction because
  $\norm{u_n}\ge\alpha>0$.
\end{proof}

\begin{lemma} \label{lem23} $w_n\weakto0$ in $E$ as $n\to\infty$.
\end{lemma}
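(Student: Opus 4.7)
The plan is to show that every weakly convergent subsequence of $(w_n)$ has limit zero, from which $w_n\weakto 0$ in $E$ follows. Since $\|w_n\|_n=1$, Proposition~\ref{norm} yields a uniform bound on $\|w_n\|$; so after passing to a subsequence $w_n\weakto w$ in $E$, $w_n\to w$ in $L^p_{\mathrm{loc}}(\Omega)$, and $w_n\to w$ a.e.\ in $\Omega$, by the Rellich--Kondrachov theorem. The goal is then $w=0$.

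The key identity is obtained by testing \eqref{eq:21} against $u_n$ itself: $\|u_n\|_n^2=\io Q_n\abs{u_n}^p\,dx$. Substituting $u_n=\|u_n\|_n w_n$ and dividing by $\|u_n\|_n^p$ gives
\begin{equation*}
  \io Q_n\abs{w_n}^p\,dx \;=\; \|u_n\|_n^{-(p-2)} \;\to\; 0,
\end{equation*}
by Lemma~\ref{lem22} and Proposition~\ref{norm}. Note that $K_n$ does not appear here, so the perturbation of $V$ plays no role in what follows.

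Next I would split $Q_n=Q_n^+-Q_n^-$. Hypothesis~\ref{item:4} forces $\{Q_n>0\}\subset B_\varepsilon(0)$ for $n\ge N_\varepsilon$, so $\io Q_n^+\abs{w_n}^p\,dx \le C\int_{B_\varepsilon(0)}\abs{w_n}^p\,dx$. Passing to the limit $n\to\infty$ via Rellich gives the bound $C\int_{B_\varepsilon(0)}\abs{w}^p\,dx$, which tends to $0$ as $\varepsilon\to 0$ since $w\in L^p(\Omega)$; thus $\int_\Omega Q_n^+\abs{w_n}^p\,dx\to 0$, and by the displayed identity also $\int_\Omega Q_n^-\abs{w_n}^p\,dx\to 0$. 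Since $Q_n^-\ge\delta_\varepsilon$ on $\Omega\setminus B_\varepsilon(0)$ for $n\ge N_\varepsilon$, this forces $\int_{\Omega\setminus B_\varepsilon(0)}\abs{w_n}^p\,dx\to 0$ for every fixed $\varepsilon>0$. Fatou's lemma then yields $w\equiv 0$ a.e.\ on $\Omega\setminus B_\varepsilon(0)$ for each $\varepsilon>0$, so $w=0$.

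The only delicate point will be the order of limits in the estimate for $\int Q_n^+\abs{w_n}^p\,dx$: one must hold $\varepsilon$ fixed during the passage $n\to\infty$ (where Rellich is applied on $B_\varepsilon(0)$) and only afterwards send $\varepsilon\to 0$ (where the integrability of $\abs{w}^p$ enters). The constant $\delta_\varepsilon$ may degenerate as $\varepsilon\to 0$, but this is harmless, since only its positivity at each fixed $\varepsilon>0$ is used to bound $\int_{\Omega\setminus B_\varepsilon(0)}\abs{w_n}^p\,dx$.
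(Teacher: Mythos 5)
Your argument is correct and rests on the same ingredients as the paper's proof: the identity $\int_\Omega Q_n\abs{w_n}^p\,dx=\norm{u_n}_n^{2-p}\to0$, Rellich compactness on a small ball to control the contribution where $Q_n>0$, and the bound $Q_n\le-\delta_\eps$ outside $B_\eps(0)$ to control the $L^p$ mass away from the origin. The only difference is presentational: you split $Q_n=Q_n^+-Q_n^-$ and argue directly, whereas the paper splits the domain with two radii $\eps<\eps_1$ and argues by contradiction; your handling of the order of limits (and of the possible degeneration of $\delta_\eps$) plays exactly the role of the paper's fixed $\eps_1$.
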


\begin{proof}
  Passing to a subsequence we may assume that $w_n\rh w$ in $E$.
  Multiplying \eqref{eq:21} (with $u=u_n$) by $u_n/\norm{u_n}_n^2$,
  we obtain
  \begin{equation}\label{eq:5}
    1 = \norm{w_n}_n^2 = \norm{u_n}_n^{p-2}\io Q_n\abs{w_n}^p\,dx.
  \end{equation}
  By Lemma \ref{lem22}, $\io Q_n\abs{w_n}^p\,dx \to 0$. Let
  $0<\eps<\eps_1$. Then
  \begin{align*}
    0 = \lim_{n\to\infty} \io Q_n\abs{w_n}^p\,dx
    & =  \lim_{n\to\infty}\left(\int_{\abs{x}<\eps}Q_n\abs{w_n}^p\,dx + \int_{\abs{x}>\eps}Q_n\abs{w_n}^p\,dx\right)  \\
    & \le  \lim_{n\to\infty}\left(\int_{\abs{x}<\eps}Q_n\abs{w_n}^p\,dx + \int_{\abs{x}>\eps_1}Q_n\abs{w_n}^p\,dx\right) \\
    & \le C \int_{\abs{x}<\eps}\abs{w}^p\,dx - \delta_{\eps_1}
    \int_{\abs{x}>\eps_1}\abs{w}^p\,dx.
  \end{align*}
  If $w\ne 0$, we may chose $\eps_1$ so small that the second
  integral on the right-hand side above is positive. Letting
  $\eps\to 0$, we obtain a contradiction.
\end{proof}

Now we can study concentration of $(u_n)$ as $n\to\infty$. Let
$\eps>0$ be given and let $\chi\in C^\infty(\Omega,[0,1])$ be
such that $\chi(x)=0$ for $x\in B_{\eps/2}(0)$ and $\chi(x)=1$
for $x\notin B_\eps(0)$. Multiplying \eqref{eq:21} (with $u=u_n$)
by $\chi u_n$ we obtain
\begin{equation*}
  \io(\nabla u_n\cdot\nabla(\chi u_n)+\chi V_nu_n^2)\,dx = \io \chi
  Q_n\abs{u_n}^p\,dx,
\end{equation*}
or equivalently,
\begin{equation*}
  \io \chi(\abs{\nabla u_n}^2+V_nu_n^2)\,dx - \io \chi Q_n\abs{u_n}^p\,dx =
  -\io u_n\nabla\chi\cdot \nabla u_n\,dx.
\end{equation*}
Given $\eps>0$, we have $Q_n\le-\delta_\eps$ and $V_n=V\ge 0$ on
$\supp\chi$ provided $n$ is large enough. Hence for all such $n$,
\begin{multline} \label{eq21}
  \int_{\Omega\setminus B_\eps(0)}(\abs{\nabla u_n}^2+V_nu_n^2)\,dx +\delta_\eps\int_{\Omega\setminus B_\eps(0)}\abs{u_n}^p\,dx \\
  \begin{aligned}
    &\quad \le \io \chi(\abs{\nabla u_n}^2+V_nu_n^2)\,dx - \io \chi Q_n\abs{u_n}^p\,dx  \\
    & \quad \le d_\eps \int_{B_\eps(0)\setminus
      B_{\eps/2}(0)}\abs{u_n}\,\abs{\nabla u_n}\,dx, 
  \end{aligned}
\end{multline}
where $d_\eps$ is a constant independent of $n$. Since $w_n =
u_n/\norm{u_n}_n \to 0$ in $L^2_{loc}(\Omega)$ according to Lemma
\ref{lem23}, it follows from H\"older's inequality that
\begin{equation*}
  \int_{B_\eps(0)\setminus B_{\eps/2}(0)}\abs{w_n}\,\abs{\nabla w_n}\,dx
  \to 0.
\end{equation*}
So (\ref{eq21}) implies
\begin{equation} \label{eq22} \lim_{n\to\infty} \left(
    \int_{\Omega\setminus B_\eps(0)}(\abs{\nabla w_n}^2+V_nw_n^2)\,dx
    +\delta_\eps \norm{u_n}_n^{p-2}\int_{\Omega\setminus
      B_\eps(0)}\abs{w_n}^p\,dx \right) = 0.
\end{equation}

\begin{theorem} \label{thm24} Suppose that $V_n$ and $Q_n$
  satisfy \textup{\ref{item:3}}, \textup{\ref{item:4}} and
  $p\in(2,2^*)$. Let $u_n$ be a nontrivial solution for
  \eqref{eq:21} and let $w_n = u_n/\norm{u_n}_n$. Then for every
  $\varepsilon>0$ it holds that
  \begin{align} \label{eq25} \lim_{n\to\infty}
    \int_{\Omega\setminus B_\eps(0)}(\abs{\nabla w_n}^2+V_nw_n^2)\,dx
    &= 0 \\\intertext{and}
    \lim_{n\to\infty}\norm{u_n}_n^{p-2}\int_{\Omega\setminus
      B_\eps(0)}\abs{w_n}^p\,dx &= 0.\label{eq:6}
  \end{align}
  Moreover,
  \begin{equation*}
    \lim_{n\to\infty} \frac{\int_{\Omega\setminus
        B_\eps(0)}(\abs{\nabla u_n}^2+V_nu_n^2)\,dx}{\io (\abs{\nabla
      u_n}^2+V_nu_n^2)\,dx} = 0 \quad\text{and}\quad
    \lim_{n\to\infty} \frac{\int_{\Omega\setminus
        B_\eps(0)}\abs{u_n}^p\,dx}{\io \abs{u_n}^p\,dx} = 0.
  \end{equation*}
\end{theorem}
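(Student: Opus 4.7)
The plan is that almost all the work has already been done in deriving equation \eqref{eq22} just before the theorem, and the four claims follow from it by short accountings. The key observation is that, by assumption \ref{item:3}, for $n$ sufficiently large $K_n$ is supported in $B_{\varepsilon/2}(0)$, so $V_n = V \ge 0$ on $\Omega\setminus B_{\varepsilon/2}(0)$; consequently both summands on the left-hand side of \eqref{eq22} are non-negative. Since the sum tends to zero, each summand tends to zero separately, which is precisely \eqref{eq25} and \eqref{eq:6}.

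For the third limit, I would simply divide the numerator of the ratio by $\|u_n\|_n^2 = \io(|\nabla u_n|^2 + V_n u_n^2)\,dx$, which rewrites the ratio as
\begin{equation*}
\frac{\int_{\Omega\setminus B_\varepsilon(0)}(|\nabla u_n|^2+V_n u_n^2)\,dx}{\|u_n\|_n^2} = \int_{\Omega\setminus B_\varepsilon(0)}(|\nabla w_n|^2+V_n w_n^2)\,dx,
\end{equation*}
which converges to $0$ by \eqref{eq25}.

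The fourth limit is the only part that needs a little extra input, namely a lower bound on $\io |u_n|^p\,dx$. Multiplying \eqref{eq:21} by $u_n$ and integrating, and using $Q_n \le C$ on $\{Q_n > 0\}$ together with $Q_n \le 0$ elsewhere, I get
\begin{equation*}
\|u_n\|_n^2 = \io Q_n |u_n|^p\,dx \le C \io |u_n|^p\,dx,
\end{equation*}
so $\io|u_n|^p\,dx \ge C^{-1}\|u_n\|_n^2$. On the other hand, writing $\int_{\Omega\setminus B_\varepsilon(0)}|u_n|^p\,dx = \|u_n\|_n^p \int_{\Omega\setminus B_\varepsilon(0)}|w_n|^p\,dx$, the ratio is bounded by
\begin{equation*}
\frac{\int_{\Omega\setminus B_\varepsilon(0)}|u_n|^p\,dx}{\io|u_n|^p\,dx} \le C\,\|u_n\|_n^{p-2}\int_{\Omega\setminus B_\varepsilon(0)}|w_n|^p\,dx,
\end{equation*}
which tends to $0$ by \eqref{eq:6}.

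In summary, there is no real obstacle once \eqref{eq22} is in hand: the main point is the positivity of $V_n$ on the integration domain (guaranteed by the localization assumption on $K_n$) so that the two non-negative pieces in \eqref{eq22} decouple, and a trivial lower bound $\io|u_n|^p\,dx \gtrsim \|u_n\|_n^2$ coming from the equation itself. Essentially all the work was done in establishing Lemma \ref{lem23} and the test-function computation leading to \eqref{eq22}.
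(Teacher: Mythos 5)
Your proposal is correct and follows essentially the same route as the paper: the decoupling of \eqref{eq22} into \eqref{eq25} and \eqref{eq:6} via nonnegativity of both summands (the paper had already noted $V_n=V\ge0$ off $B_\eps(0)$ for large $n$ when deriving \eqref{eq21}), the rewriting of the third quotient in terms of $w_n$ using $\norm{w_n}_n^2=1$, and your bound $\io\abs{u_n}^p\,dx\ge C^{-1}\norm{u_n}_n^2$ is just \eqref{eq:5} combined with $Q_n\le C$, leading to the same estimate of the fourth quotient by $C\norm{u_n}_n^{p-2}\int_{\Omega\setminus B_\eps(0)}\abs{w_n}^p\,dx$. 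No gaps; this matches the paper's proof.
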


\begin{proof}
  The first conclusion is an immediate consequence of
  \eqref{eq22}. Since
  \begin{equation*}
    \io (\abs{\nabla w_n}^2+V_nw_n^2)\,dx \equiv \norm{w_n}_n^2 = 1,
  \end{equation*}
  it follows from \eqref{eq25} that
  \begin{equation*}
    \lim_{n\to\infty} \frac{\int_{\Omega\setminus
        B_\eps(0)}(\abs{\nabla u_n}^2+V_nu_n^2)\,dx}{\io (\abs{\nabla
      u_n}^2+V_nu_n^2)\,dx} = \lim_{n\to\infty}
    \frac{\int_{\Omega\setminus B_\eps(0)}(\abs{\nabla
      w_n}^2+V_nw_n^2)\,dx}{\io (\abs{\nabla w_n}^2+V_nw_n^2)\,dx} = 0.
  \end{equation*}
  By \eqref{eq:5}
  \begin{equation*}
    C\norm{u_n}_n^{p-2}\io\abs{w_n}^p\,dx \ge \norm{u_n}_n^{p-2}\io
    Q_n\abs{w_n}^p\,dx = \norm{w_n}_n^2 = 1.
  \end{equation*}
  This and \eqref{eq:6} imply
  \begin{equation*}
    \lim_{n\to\infty} \frac{\int_{\Omega\setminus
        B_\eps(0)}\abs{u_n}^p\,dx}{\io \abs{u_n}^p\,dx} = \lim_{n\to\infty}
    \frac{\norm{u_n}_n^{p-2}\int_{\Omega\setminus
        B_\eps(0)}\abs{w_n}^p\,dx}{\norm{u_n}_n^{p-2}\io \abs{w_n}^p\,dx} = 0.
  \end{equation*}
\end{proof}

\section{Concentration in the $L^q$-norm}
\label{sec:conc-lq-norm}

Here we consider concentration in other norms.  Note that
\begin{equation*}
  \frac{N(p-2)}{2}<p
  \qquad\text{and}\qquad \text{if } N\ge 3, \text{ then } \frac{2N-2}{N-2}<2^*.
\end{equation*}

\begin{theorem}
  \label{prop:conc-lq-norm} Suppose that \ref{item:3} and
  \ref{item:4} hold and there exist $R,\lambda>0$ such that
  $V\ge\lambda$ whenever $x\in\Omega\setminus B_R(0)$.  For every
  $n\in\dN$ let $u_n$ denote a non\-trivial solution to
  \eqref{eq:21}.  If $\varepsilon>0$ is such that
  $\olB_\varepsilon(0)\subset\Omega$, then the following hold:
  \begin{enumerate}[label=\textup{(\alph{*})}]
  \item \label{item:5} For every $q\in[1,\infty]$ the norm
    $\abs{u_n}_{q,\Omega\ssm B_\varepsilon(0)}$ remains bounded,
    uniformly in $n$.
  \item \label{item:2} If $\delta=\delta_{\varepsilon}>0$ in
    \ref{item:4} can be chosen independently of $\varepsilon>0$,
    if $N\ge3$ and $p\in\xlr[){\frac{2N-2}{N-2},2^*}$, then
    $\lim_{n\to\infty} \abs{u_n}_{q,\Omega \ssm
      B_\varepsilon(0)}=0$, for every $q\in[1,\infty]$.
  \item \label{item:1} For every $q\ge 1$, $q\in \biglr(]
    {\frac{N(p-2)}{2}, \infty}$ it holds that $\lim_{n\to\infty}
    \abs{u_n}_q= \infty$ and
    \begin{equation}
      \label{eq:1}
      \lim_{n\to\infty} 
      \frac{\abs{u_n}_{q,\Omega\ssm B_\varepsilon(0)}}{\abs{u_n}_{q}}=0.
    \end{equation}
  \item \label{item:8} If $\frac{N(p-2)}2 \ge 1$, then for $q=\frac{N(p-2)}{2}$ it holds that
    \begin{equation}
      \label{eq:11}
      \liminf_{n\to\infty}\abs{u_n}_q>0.
    \end{equation}
    If the hypotheses in \ref{item:2} are satisfied, then
    \eqref{eq:1} holds for this $q$.
  \end{enumerate}
\end{theorem}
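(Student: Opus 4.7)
The plan is to combine a Keller--Osserman-type pointwise estimate outside the concentration region with interpolation inequalities that exploit the divergence $\|u_n\|\to\infty$ from Lemma~\ref{lem22}. By Kato's inequality I may replace $u_n$ by $|u_n|$ and thus assume $u_n\ge 0$ throughout.

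First I would establish the pointwise bound that drives \ref{item:5}. For $n$ large, \ref{item:4} gives $Q_n\le-\delta_\varepsilon$ and $V_n=V$ on $\Omega\setminus B_\varepsilon(0)$, so the equation rearranges to
\begin{equation*}
-\Delta u_n+Vu_n+\delta_\varepsilon u_n^{p-1}\le 0 \quad \text{on } \Omega\setminus B_\varepsilon(0).
\end{equation*}
A direct computation shows that the radial function $v_\varepsilon(x):=c_\varepsilon(|x|-\varepsilon/2)^{-2/(p-2)}$, with $c_\varepsilon$ large enough (depending only on $N$, $p$, $\delta_\varepsilon$), is a supersolution of $-\Delta v+\delta_\varepsilon v^{p-1}=0$ on $\{|x|>\varepsilon/2\}$. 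Since $v_\varepsilon=\infty$ on $\partial B_{\varepsilon/2}(0)$, $u_n=0$ on $\partial\Omega$, and both tend to $0$ at infinity, the weak maximum principle yields $u_n\le v_\varepsilon$, hence $|u_n|_{\infty,\Omega\setminus B_\varepsilon(0)}\le c_\varepsilon(\varepsilon/2)^{-2/(p-2)}$ uniformly in $n$. Combining this with the exponential decay $u_n(x)\le Ce^{-\sqrt{\lambda}\,|x|}$ for $|x|>R$ (via comparison with a barrier for $-\Delta w+\lambda w=0$, using $V\ge\lambda$ on $\Omega\setminus B_R(0)$) produces an $L^q(\Omega\setminus B_\varepsilon(0))$-majorant for every $q\in[1,\infty]$, settling \ref{item:5}.

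For \ref{item:1} and the first assertion of \ref{item:8}, the engine is interpolation. The equation and Sobolev embedding give $\|u_n\|^2\le C|u_n|_p^p\le C\|u_n\|^p$, so $\|u_n\|\to\infty$ and $|u_n|_p\to\infty$. Setting $q_c:=N(p-2)/2$ and taking $q_c\le q<p<2^*$, the interpolation
\begin{equation*}
|u_n|_p\le|u_n|_q^{\theta}|u_n|_{2^*}^{1-\theta},\qquad \tfrac1p=\tfrac{\theta}{q}+\tfrac{1-\theta}{2^*},
\end{equation*}
combined with $|u_n|_{2^*}\le C\|u_n\|$ and $\|u_n\|^2\le C|u_n|_p^p$, yields $\|u_n\|^{2-p(1-\theta)}\le C|u_n|_q^{p\theta}$. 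A short algebraic check identifies $\theta\ge(p-2)/p$ with $q\ge q_c$: strict inequality makes the exponent of $\|u_n\|$ positive, so $|u_n|_q\to\infty$, while equality produces $|u_n|_q\ge c>0$, which is \eqref{eq:11}. An analogous interpolation between $L^2$ and $L^q$ treats $q>p$, and for $q=\infty$ the identity $|u_n|_p^p=|u_n|_{p,B_\varepsilon(0)}^p+|u_n|_{p,\Omega\setminus B_\varepsilon(0)}^p$ together with \ref{item:5} forces $|u_n|_{p,B_\varepsilon(0)}^p\to\infty$, after which $|u_n|_\infty^p\,|B_\varepsilon(0)|\ge|u_n|_{p,B_\varepsilon(0)}^p$ gives $|u_n|_\infty\to\infty$. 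The limit \eqref{eq:1} in \ref{item:1} is then bounded-over-divergent.

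The genuinely delicate point is \ref{item:2}. With $\delta$ independent of $\varepsilon$, the argument above improves to the $n$-independent Keller--Osserman bound $u_n(x)\le c\,|x|^{-2/(p-2)}$ on $\Omega\setminus\{0\}$. Interior $W^{2,r}$ estimates on compact subsets of $\Omega\setminus\{0\}$ then make $(u_n)$ precompact in $C^0_{\rmloc}(\Omega\setminus\{0\})$; any accumulation point $u\ge 0$ solves $-\Delta u+Vu=Q\,u^{p-1}$, where $Q$ is the weak-$*$ limit of $Q_n$, which satisfies $Q\le-\delta$. The hypotheses $N\ge 3$ and $p\ge(2N-2)/(N-2)$ are exactly what a Liouville/Gidas--Spruck-type argument needs to force $u\equiv 0$; this identification is, in my view, the hardest step. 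Once $u_n\to 0$ locally uniformly on $\Omega\setminus\{0\}$, the exponential decay at infinity promotes this to uniform convergence on $\Omega\setminus B_\varepsilon(0)$, and dominated convergence against the Keller--Osserman plus exponential majorant delivers \ref{item:2} for every $q\in[1,\infty]$. The ratio statement in the second half of \ref{item:8} then follows from \ref{item:2} (numerator $\to 0$) together with the lower bound \eqref{eq:11} (denominator bounded below).
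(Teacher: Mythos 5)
Your parts \ref{item:5}, \ref{item:1} and \ref{item:8} are essentially the paper's argument: a blow-up barrier on $\{\abs{x}>\varepsilon/2\}$ plus the exponential comparison outside $B_R(0)$ for \ref{item:5} (the paper uses the Marcus--V\'eron large solution where you use the explicit Keller--Osserman barrier $c_\varepsilon(\abs{x}-\varepsilon/2)^{-2/(p-2)}$, which is fine, modulo the usual $\gamma$-shift to run the comparison at infinity), and the same interpolation between $L^q$ and $L^{2^*}$ driven by $\norm{u_n}\to\infty$ for \ref{item:1} and \eqref{eq:11}. The genuine problem is \ref{item:2}. Your plan is: uniform bound $\abs{u_n}\le c\abs{x}^{-2/(p-2)}$, compactness in $C^0_{\rmloc}(\Omega\ssm\{0\})$, and then a ``Liouville/Gidas--Spruck-type'' theorem to force the limit to vanish. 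First, Gidas--Spruck is the wrong family of results: it concerns the source equation $-\Delta u=u^{q}$, whereas here the limit object satisfies the absorption inequality $\Delta u\ge\delta u^{p-1}$. More seriously, your limit problem lives on $\Omega\ssm\{0\}$, not on $\dR^N$, and the interior information you have established cannot force $u\equiv0$: for bounded $\Omega$ the boundary blow-up solution of $\Delta u=\delta u^{p-1}$ on $\Omega$ is a positive function on $\Omega\ssm\{0\}$ satisfying the same equation (with any $Q\le-\delta$ absorbed), the same singularity bound near $0$ (it is even bounded there), and all your interior estimates. To exclude such limits you must use the Dirichlet condition $u_n=0$ on $\partial\Omega$ \emph{uniformly in $n$}, and nothing in the theorem gives you that ($\Omega$ is an arbitrary domain, no boundary regularity is assumed, and your fixed majorant $w$ is strictly positive near $\partial\Omega$). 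Note also that your $n$-independent bound $c\abs{x}^{-2/(p-2)}$ does not decay in $n$, so it cannot by itself yield \ref{item:2}; the whole weight rests on the limit identification, which is exactly where the argument is incomplete. A further small point: after the Kato reduction your accumulation point is only a subsolution, not a solution, and at the endpoint $p=\frac{2N-2}{N-2}$ the singularity bound is of the same order as the fundamental solution, so removability is genuinely borderline and not a soft step.

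The paper avoids all of this by never passing to a limit in $u_n$: it compares $u_n$ with the exact large solutions $w_n$ of $-\Delta w=-\delta\abs{w}^{p-2}w$ on the exterior domains $\dR^N\ssm B_{1/n}(0)$ (blowing up on $\partial B_{1/n}(0)$, tending to $0$ at infinity). These majorants are monotone in $n$, converge to a nonnegative solution on $\dR^N\ssm\{0\}$, which by the Brezis--V\'eron removable singularity theorem (this is precisely where $N\ge3$ and $p\ge\frac{2N-2}{N-2}$ enter) extends to an entire solution, identically zero by the Liouville-type result of Marcus--V\'eron on all of $\dR^N$. Thus the majorants themselves tend to $0$ locally uniformly on $\dR^N\ssm\{0\}$, and feeding $M=\max_{\abs{x}=R}w_n\to0$ into the exponential comparison lemma gives $\abs{u_n}_{q,\Omega\ssm B_\varepsilon(0)}\to0$ for all $q\in[1,\infty]$ without any control of $u_n$ near $\partial\Omega$. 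If you want to keep your compactness scheme, you would have to replace the Liouville step by a comparison argument of exactly this type; as written, \ref{item:2} (and hence the second half of \ref{item:8}) is not proved.
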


Note that $V\ge\lambda>0$ for $x\in \Omega\setminus B_R(0)$ is
trivially satisfied if $\Omega$ is bounded and $R$ large
enough. Note also that it follows from the Poincar\'e inequality
that the above condition and $V\ge 0$ for all $x$ imply
$\sigma(-\Delta+V)\subset(0,\infty)$.

\begin{remark}   \label{rem:kerr}
  It holds that
  \begin{equation}
    \label{eq:4}
    \frac{N(p-2)}{2}<2\qquad\text{iff}\qquad
    p<2+\frac{4}{N}.
  \end{equation}
  In that case $(u_n)$ concentrates with respect to the
  $L^q$-norm for every $q\in[2,\infty]$, covering the physically
  interesting $L^2$-concentration. 
  In particular, for the travelling planar waves considered in
  the introduction we have $N=1$ or 2.  In a Kerr medium, where $p=4$, \eqref{eq:4} and
  Theorem~\ref{prop:conc-lq-norm} yield concentration
  near $0$ with respect to the $L^2$-norm for $N=1$ but not for $N=2$.
\end{remark}
\begin{proof}[Proof of Theorem~\ref{prop:conc-lq-norm}]
  To prove \ref{item:5}, fix $\delta_{\varepsilon/2}>0$ and
  $N_{\varepsilon/2}$ as in \ref{item:4}.  By
  \cite[Sect.~1.6]{MR1226934} there is a positive classical
  solution $w$ of the equation
  \begin{equation*}
    -\Delta u=-\delta_{\varepsilon/2}\abs{u}^{p-2}u
  \end{equation*}
  on $\dR^N\ssm \olB_{\varepsilon/2}(0)$ that satisfies
  $\lim_{\abs{x}\to\varepsilon/2}w(x)=\infty$ and
  $\lim_{\abs{x}\to\infty}w(x)=0$.  Fixing $n\ge
  N_{\varepsilon/2}$, setting $z_n\coloneqq w-u_n$ and
  \begin{equation*}
    \varphi_n(x)
    \coloneqq(p-1)\int_0^1\abs{sw(x)+(1-s)u_n(x)}^{p-2}\dint s
    \ge0
  \end{equation*}
  we obtain 
 \begin{align*}
  \vp_nz_n &  =  (p-1) \int_0^1 \abs{sw+(1-s)u_n}^{p-2}(w-u_n)\,ds \\
  & =  \int_0^1\frac d{ds}\left(\abs{sw+(1-s)u_n}^{p-2}(sw+(1-s)u_n)\right)ds 
   = w^{p-1}-\abs{u_n}^{p-2}u_n
\end{align*}
   and hence
  from \ref{item:4} 
  \begin{equation*}
    (-\Delta+V-Q_n \varphi_n)z_n
    =-\Delta w+Vw-Q_n w^{p-1} 
    \ge-\Delta w+\delta_{\varepsilon/2} w^{p-1}=0.
  \end{equation*}
  Note that $V-Q_n\varphi_n\ge0$ in
  $\Omega\ssm\olB_{\varepsilon/2}(0)$ since $n\ge
  N_{\varepsilon/2}$.  By the continuity of $u_n$ and since
  $w_n(x)\to\infty$ as $x\to\partial B_{\varepsilon/2}(0)$, there
  is $r\in(\varepsilon/2,\varepsilon)$ such that $z_n\ge 0$ on
  $\partial B_r(0)$.  Moreover, $z_n\ge0$ on $\partial\Omega$.
  If $\Omega$ is bounded then we may apply the maximum principle
  for weak supersolutions \cite[Theorem~8.1]{MR737190} to $z_n$
  and obtain $z_n\ge0$ in $\Omega\ssm B_r(0)$.  If $\Omega$ is
  unbounded, we consider any $\gamma>0$ and pick $\wt R>0$ such
  that $z_n\ge-\gamma$ in $\Omega\ssm B_{\wt R}(0)$.  This is
  possible since $w(x)$ tends to 0 as $\abs{x}\to\infty$ by
  construction.  Moreover, $u_n\in E$ and standard estimates from
  regularity theory imply that also $u_n(x)\to0$ as
  $\abs{x}\to\infty$.  Now the same maximum principle, applied on
  $\Omega\cap(B_{\wt R}(0)\ssm\olB_r(0))$, implies
  $z_n\ge-\gamma$ in all of $\Omega\ssm B_r(0)$.  Letting
  $\gamma\to0$ we obtain $z_n\ge0$ also in this case.  In an
  analogous way we obtain $u_n\ge-w$ (take $z_n:=w+u_n$), and
  hence
  \begin{equation*}
    \abs{u_n}\le w\qquad\text{in }\Omega\ssm
    B_\varepsilon(0),\ \text{for all }n\ge N_{\varepsilon/2}.
  \end{equation*}
  Note that $w$ is continuous in $\overline{\Omega}\ssm
  B_\varepsilon(0)$. Hence \ref{item:5} follows if $\Omega$ is
  bounded. For unbounded $\Omega$, according to Lemma \ref{exp}
  below, setting $M := \max_{\abs{x}=R}w$ we obtain $\abs{u_n} \le
  Me^{-\alpha\abs{x-R}}$ whenever $x\in \Omega\setminus B_R(0)$. So
  the conclusion in \ref{item:5} holds also in this case.

  Next we consider \ref{item:2}.  The hypotheses imply that there
  is $\delta>0$ such that $Q_n\le-\delta$ on $\Omega\ssm
  B_{1/n}(0)$ for every $n$ large enough.  Denote by $w_n$ a
  positive solution of
  \begin{equation}
    \label{eq:8}
    -\Delta u=-\delta\abs{u}^{p-2}u
  \end{equation}
  on $\dR^N\ssm B_{1/n}(0)$ with boundary conditions
  $\lim_{\abs{x}\to1/n} w_n(x) =\infty$ and
  $\lim_{\abs{x}\to\infty} w_n(x)=0$, as before.  Then the
  sequence $w_n$ is monotone decreasing since $w_n\ge w_{n+1}$ on
  $B_{1/n}(0)$ for every $n\in\dN$ by the maximum principle
  (using similar arguments as before).  Therefore $w_n$
  converges locally uniformly to a nonnegative solution $w$ of
  \eqref{eq:8} on $\dR^N\ssm\{0\}$.  Our hypotheses on $N$ and
  $p$, and \cite[Theorem~2]{MR592099} imply that $w$ extends to
  an entire solution of \eqref{eq:8}.  By
  \cite[Theorem~1.3]{MR1226934} $w\equiv0$.  On the other hand,
  the function $w_{n}$ dominates the solution $u_n$ on
  $\overline{\Omega}\ssm B_r(0)$ for some
  $r\in(\varepsilon/2,\varepsilon)$ and large $n$, as seen in the
  proof of \ref{item:5}.  Therefore also $u_n$ converges to $0$
  locally uniformly in $\Omega\ssm B_r(0)$.  Together with
  Lemma~\ref{exp} (take $M:=\max_{\abs{x}=R}w_n$) we obtain
  $\lim_{n\to\infty} \abs{u_n}_{q,\Omega\ssm \olB_\varepsilon(0)}
  =0$.

  In the proof of \ref{item:1} first consider the case $q\ge 1$,
  $q\in(N(p-2)/2,p]$.  Since $u_n$ is a solution, by
  \ref{item:3}, H\"older's inequality, the Sobolev embedding, and
  Proposition~\ref{norm} we have
  \begin{equation}\label{eq:10}
    \norm{u_n}_n^2
    =\int_\Omega Q_n\abs{u_n}^p
    \le C_1\abs{u_n}_p^p
    \le C_1\abs{u_n}_q^{p\theta}\abs{u_n}_{2^*}^{p(1-\theta)}
    \le C_2\abs{u_n}_q^{p\theta}\norm{u_n}_n^{p(1-\theta)}.
  \end{equation}
  Here $C_1,C_2$ are independent of $n$, and $\theta$ satisfies
  \begin{equation*}
    \frac1p=\frac\theta q+\frac{1-\theta}{2^*}.
  \end{equation*}
  From Lemma~\ref{lem22} we see that it is sufficient to impose
  $p(1-\theta)<2$ or, equivalently, $q>N(p-2)/2$.  This and
  \ref{item:5} prove the case $q \in(N(p-2)/2,p]$.

  Since we already know from \eqref{eq:10} that
  $\abs{u_n}_p\to\infty$, \ref{item:5} yields
  $\abs{u_n}_{p,B_\varepsilon(0)} \to\infty$ and hence
  $\abs{u_n}_{q,B_\varepsilon(0)} \to\infty$ as $n\to\infty$, for
  every $q\in[p,\infty]$.  Now \eqref{eq:1} follows from
  \ref{item:5}.

  To prove \ref{item:8} we note that \eqref{eq:10} implies
  \eqref{eq:11} for $q=\frac{N(p-2)}{2}$.  The other claim is
  obvious.
\end{proof}

\begin{lemma} \label{exp} Suppose $\Omega$ is unbounded and
  $V(x)\ge \lambda > 0$ for $x\in\Omega\setminus B_R(0)$. If
  $u_n$ is a nontrivial solution to \eqref{eq:21} and $\abs{u_n}\le
M$  on $\partial B_R(0)$, then $\abs{u_n} \le Me^{-\alpha\abs{x-R}}$ for $x\in\Omega\setminus
  B_R(0)$, where $\alpha:=\sqrt\lambda$.
\end{lemma}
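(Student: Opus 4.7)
The approach is to construct a radial exponential barrier and repeat, on the domain $\Omega_R \coloneqq \Omega \ssm \olB_R(0)$, the comparison argument already employed in the proof of Theorem~\ref{prop:conc-lq-norm}\ref{item:5}. Set $\alpha \coloneqq \sqrt{\lambda}$ and $\psi(x) \coloneqq Me^{-\alpha(\abs{x}-R)}$. Since $\psi$ is radial, a direct computation gives, for $\abs{x}>R$,
$$\Delta\psi(x) = \xlr(){\alpha^2 - \frac{(N-1)\alpha}{\abs{x}}}\psi(x) \le \alpha^2\psi(x) = \lambda\psi(x),$$
hence $-\Delta\psi + V\psi \ge -\Delta\psi + \lambda\psi \ge 0$ on $\Omega_R$. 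Note also that by \ref{item:3} and \ref{item:4}, for $n$ sufficiently large we have $V_n = V$ and $Q_n \le 0$ on $\Omega_R$.

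Following the linearization trick of the proof of Theorem~\ref{prop:conc-lq-norm}\ref{item:5}, set
$$\varphi_n(x) \coloneqq (p-1)\int_0^1 \abs{s\psi(x)+(1-s)u_n(x)}^{p-2}\,ds \ge 0,$$
so that the telescoping identity yields $\varphi_n(\psi - u_n) = \psi^{p-1} - \abs{u_n}^{p-2}u_n$. Combining the equation $-\Delta u_n + Vu_n = Q_n\abs{u_n}^{p-2}u_n$ with the supersolution property of $\psi$ and using $Q_n \le 0$, $\psi>0$, we obtain
$$(-\Delta + V - Q_n\varphi_n)(\psi - u_n) = (-\Delta\psi + V\psi) - Q_n\psi^{p-1} \ge 0.$$
Observe that $V - Q_n\varphi_n \ge 0$ on $\Omega_R$. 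On $\partial B_R(0)$ we have $\psi - u_n \ge M - \abs{u_n} \ge 0$; on $\partial\Omega \cap \overline{\Omega_R}$ we have $\psi - u_n = \psi \ge 0$; and $u_n(x) \to 0$ as $\abs{x}\to\infty$ by the decay built into \eqref{eq:21} (or alternatively by standard interior elliptic regularity applied to $u_n \in H^1_0(\Omega)$). The weak maximum principle \cite[Theorem~8.1]{MR737190}, combined with the $\gamma$-approximation device used in the proof of \ref{item:5} to handle unbounded domains, therefore yields $\psi \ge u_n$ on $\Omega_R$.

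Since the nonlinearity $u \mapsto \abs{u}^{p-2}u$ is odd, $-u_n$ solves the same equation \eqref{eq:21}, and $\abs{-u_n} = \abs{u_n} \le M$ on $\partial B_R(0)$. Applying the argument above to $-u_n$ in place of $u_n$ gives $\psi \ge -u_n$; combining, $\abs{u_n(x)} \le \psi(x) = Me^{-\alpha(\abs{x}-R)}$ on $\Omega_R$, which is the claimed bound.

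The main technical difficulty is the use of the weak maximum principle on an unbounded exterior domain, but this is precisely the point at which the proof of Theorem~\ref{prop:conc-lq-norm}\ref{item:5} introduces the auxiliary parameter $\gamma>0$ and truncates to $\Omega_R \cap B_{\wt R}(0)$ before letting $\wt R \to \infty$ and $\gamma\to 0$; the same truncation works here without modification, since both $\psi(x)$ and $u_n(x)$ tend to $0$ as $\abs{x}\to\infty$.
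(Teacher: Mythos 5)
Your proof is correct, but it follows a different (and somewhat heavier) route than the paper's own argument for Lemma~\ref{exp}. You recycle the machinery from the proof of Theorem~\ref{prop:conc-lq-norm}\ref{item:5}: the linearization $\varphi_n$, the operator $-\Delta+V-Q_n\varphi_n$ with nonnegative zeroth-order coefficient, the weak maximum principle of \cite[Theorem~8.1]{MR737190} together with the $\gamma$-truncation on $\Omega\cap(B_{\wt R}(0)\ssm \olB_R(0))$, and oddness of $u\mapsto\abs{u}^{p-2}u$ to get the lower bound $-\psi\le u_n$. All the individual steps check out: $\psi=Me^{-\alpha(\abs{x}-R)}$ is indeed a supersolution of $-\Delta+V$ on $\Omega\setminus B_R(0)$ because $\Delta\psi\le\alpha^2\psi=\lambda\psi\le V\psi$ there, the telescoping identity gives $\varphi_n(\psi-u_n)=\psi^{p-1}-\abs{u_n}^{p-2}u_n$, and the boundary and decay conditions are handled correctly. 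The paper instead argues directly, in the spirit of \cite[Proposition~4.4]{MR960098}: it works on the sets $\Omega_S=\{x\in\Omega: R<\abs{x}<S,\ u_n(x)>W(x)\}$, where $u_n>0$, so the terms $V_nu_n$ and $Q_n\abs{u_n}^{p-2}u_n$ can be discarded at once (giving $-\Delta u_n\le-\lambda u_n$), and one only needs $\Delta(W-u_n)\le\alpha^2(W-u_n)\le 0$ plus the classical maximum principle on the bounded set $\Omega_S$, letting $S\to\infty$; the sign $u_n\ge -W$ is obtained by the symmetric modification rather than by invoking oddness for $-u_n$. Your version buys uniformity of method with the earlier comparison argument and avoids discussing the set where $u_n$ exceeds the barrier; the paper's version is more elementary, needing no linearized operator and no weak-supersolution maximum principle. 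One small remark, which applies equally to the paper's own proof: the reduction to $V_n=V$ and $Q_n\le-\delta$ (or $Q_n\le 0$) on $\Omega\setminus B_R(0)$ uses \ref{item:3}--\ref{item:4} and hence is only available for $n$ large, so the lemma is implicitly a statement about sufficiently large $n$, which is how it is used in the proof of Theorem~\ref{prop:conc-lq-norm}.
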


\begin{proof} 
  We follow the argument in \cite[Proposition
  4.4]{MR960098}. Write $u=u_n$ and let
  \begin{gather*}
    W(x) := Me^{-\alpha (\abs{x}-R)}, \\
    \Omega_S := \{ x\in\Omega: R<\abs{x} <S \text{ and } u(x) >W(x)
    \}.
  \end{gather*}
  Condition \ref{item:4} implies that there is $\delta>0$ such
  that for $x\in\Omega_S$ we have $u(x)>0$ and
  \begin{equation*}
  -\Delta u \le -V(x)u-\delta\abs{u}^{p-2}u \le -\lambda u,
  \end{equation*}
  hence
  \begin{equation} \label{ineq1} \Delta (W-u)=\left( \alpha^2-
      \frac { \alpha (N-1)}{\abs{x}} \right)W-\Delta u \leq
    \alpha^2(W-u) \leq 0
  \end{equation}
  for such $x$. By the maximum principle,
  \begin{equation*}
  W(x)-u(x) \geq \min_{x \in \partial \Omega_S} (W-u) \geq \min
  \left\{ 0, \min_{\abs{x} =S}(W-u)\right\}.
  \end{equation*}
  Since $\lim_{\abs{x} \to \infty} u(x)= \lim_{\abs{x} \to \infty}
  W(x)=0$, letting $S \to \infty$ we obtain
  \begin{equation*}
  u(x) \leq W(x) = Me^{-\alpha (\abs{x}-R)} \quad \text{for} \quad
  x\in \Omega\setminus B_R(0).
  \end{equation*}
  Replacing $u(x)>W(x)$ by $u(x)<-W(x)$ in the definition of
  $\Omega_S$ and $W-u$ by $W+u$ in \eqref{ineq1}, we see that
  $u\ge -W$ for $x\in\Omega\setminus B_R(0)$.
\end{proof}

\begin{remark}
  In the proof of \ref{item:2} it was essential that \eqref{eq:8}
  has no nontrivial solution $w\ge 0$ in $\rn\setminus\{0\}$. If
  $2<p<(2N-2)/(N-2)$, this argument cannot be used because
  $w=c_p\abs{x}^{-2/(p-2)}$ is a solution of \eqref{eq:8} for a
  suitable constant $c_p>0$ (note that if $p>(2N-2)/(N-2)$, then
  $w$ solves equation \eqref{eq:8} with $\delta$ replaced by
  $-\delta$).
\end{remark}

\section{Concentration at several points}
\label{sec:conc-sever-regi}

In this section we assume that the functions $Q_n$ are positive
in a neighbourhood of two distinct points $x_1,x_2\in\Omega$ and
$V_n$ may not be equal to $V$ in this neighbourhood. More
precisely, we assume

\begin{enumerate}[label=(A\arabic{*})]\setcounter{enumi}{2}
\item \label{item:6} $V\in L^\infty(\Omega)$, $V\ge 0$ and
  $\sigma(-\Delta+V)\subset(0,\infty)$, where the spectrum
  $\sigma$ is realized in $H^1_0(\Omega)$. $V_n = V+K_n$, where
  $K_n\in L^\infty(\Omega)$, and there exists a constant $B$ such
  that $\norm{K_n}_\infty \le B$ for all $n$. Moreover, for each
  $\eps>0$ there is $N_\eps$ such that $\supp K_n\subset
  B_\eps(x_1)\cup B_\eps(x_2)$ whenever $n\ge N_\eps$.
\item \label{item:7} $Q_n\in L^\infty(\Omega)$, $Q_n>0$ in a
  neighbourhood of $\{x_1\}\cup\{x_2\}$ and there exists a
  constant $C$ such that $\norm{Q_n}_\infty\le C$ for all
  $n$. Moreover, for each $\eps>0$ there exist constants
  $\delta_\eps>0$ and $N_\eps$ such that $Q_n\le -\delta_\eps$
  whenever $x\notin B_\eps(x_1)\cup B_\eps(x_2)$ and $n\ge
  N_\eps$.
\end{enumerate}

We have taken two points $x_1,x_2$ for notational convenience
only. The arguments below are valid for any finite number of
points in $\Omega$.

It is clear that the arguments of Section \ref{sec:conc-h1-norm}
go through with obvious changes if one replaces
\ref{item:3}-\ref{item:4} by \ref{item:6}-\ref{item:7}. Our
purpose here is to show that if \ref{item:6}-\ref{item:7} hold,
then each ground state $u_n$ for $n$ large concentrates exactly
at one of the points $x_1, x_2$.  In Section
\ref{sec:conc-h1-norm} $u_n$ could be any nontrivial solution to
\eqref{eq:21}. To the contrary, in Theorem \ref{thm41} below it
is important that $u_n$ is a ground state.

As in Section \ref{sec:conc-h1-norm}, we put $J_n(u) = \io
Q_n\abs{u}^p\,dx$ and
\begin{equation*}
s_n := \inf_{J_n(u)>0}\frac{\norm{u}_n^2}{J_n(u)^{2/p}} \equiv
\inf_{J_n(u)>0}\frac{\io(\abs{\nabla u}^2+V_nu^2)\,dx}{\left(\io
    Q_n\abs{u}^p\,dx\right)^{2/p}}.
\end{equation*}

\begin{theorem} \label{thm41} Suppose that $V_n$ and $Q_n$
  satisfy \ref{item:6}, \ref{item:7} and $p\in(2,2^*)$. Let $u_n$
  be a ground state solution for \eqref{eq:21}. Then, for $n$
  large, $u_n$ concentrates at $x_1$ or $x_2$. More precisely,
  for each $\eps>0$ we have, passing to a subsequence,
  \begin{equation} \label{eq41}
    \lim_{n\to\infty}\frac{\int_{\Omega\setminus
        B_\eps(x_j)}(\abs{\nabla u_n}^2+V_nu_n^2)\,dx}{\io(\abs{\nabla
      u_n}^2+V_nu_n^2)\,dx} = 0 \text{ and
    }\lim_{n\to\infty}\frac{\int_{\Omega\setminus
        B_\eps(x_j)}Q_n\abs{u_n}^p\,dx}{\io Q_n\abs{u_n}^p\,dx} = 0
  \end{equation} 
  for $j=1$ or 2 (but not for $j=1$ and 2).
\end{theorem}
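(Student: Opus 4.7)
My plan is to mirror the strategy of Section~\ref{sec:conc-h1-norm} until $u_n$ is known to concentrate on the two-point set $\{x_1,x_2\}$, and then exploit the ground-state property together with the strict subadditivity of $t\mapsto t^{2/p}$ (valid since $p>2$) to rule out splitting between the two points. As a preliminary, I would redo Proposition~\ref{norm}, Lemmas~\ref{lem22}--\ref{lem23} and Theorem~\ref{thm24} with $B_\eps(0)$ replaced by $U_\eps:=B_\eps(x_1)\cup B_\eps(x_2)$; the proofs carry over verbatim. With $w_n:=u_n/\norm{u_n}_n$, this yields $\norm{u_n}_n\to\infty$, $w_n\rightharpoonup 0$ in $E$, and, for every $\eps>0$,
\begin{equation*}
  \int_{\Omega\setminus U_\eps}(\abs{\nabla w_n}^2+V_nw_n^2)\,dx\to 0
  \quad\text{and}\quad
  \norm{u_n}_n^{p-2}\int_{\Omega\setminus U_\eps}\abs{w_n}^p\,dx\to 0.
\end{equation*}

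Fix $d>0$ with $\olB_{2d}(x_1)\cap\olB_{2d}(x_2)=\emptyset$ and pick cutoffs $\chi_j\in C^\infty_c(B_d(x_j))$ with $\chi_j\equiv 1$ on $B_{d/2}(x_j)$; set $v_n^j:=\chi_j u_n\in E$, which have disjoint supports. Expanding $\norm{v_n^j}_n^2$ and $J_n(v_n^j)$ and controlling the cross terms $\io u_n\nabla\chi_j\cdot\nabla u_n\,dx$, $\io u_n^2\abs{\nabla\chi_j}^2\,dx$ and the discrepancy $\int(1-\chi_1^p-\chi_2^p)Q_n\abs{u_n}^p\,dx$ (all three supported in $\Omega\setminus U_{d/2}$) via Cauchy--Schwarz, the $H^1$- and $L^p$-concentration from step one, and the strong $L^2_{\rmloc}$-convergence $w_n\to 0$ (from $w_n\rightharpoonup 0$ and Rellich--Kondrachov), I obtain
\begin{gather*}
  \norm{v_n^1}_n^2+\norm{v_n^2}_n^2=\norm{u_n}_n^2+o(\norm{u_n}_n^2),\\
  J_n(v_n^1)+J_n(v_n^2)=J_n(u_n)+o(J_n(u_n)).
\end{gather*}

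For the main step, put $\alpha_n:=J_n(v_n^1)/J_n(u_n)$ and $\beta_n:=J_n(v_n^2)/J_n(u_n)$, so $\alpha_n+\beta_n\to 1$; pass to a subsequence with $\alpha_n\to\alpha$ and $\beta_n\to\beta=1-\alpha$. Since $u_n$ is a ground state, $\norm{u_n}_n^2=s_nJ_n(u_n)^{2/p}$, whereas $\norm{v_n^j}_n^2\ge s_nJ_n(v_n^j)^{2/p}$ whenever $J_n(v_n^j)>0$; combining these inequalities with $\norm{v_n^j}_n^2\le\norm{u_n}_n^2+o(\norm{u_n}_n^2)$ forces $\alpha,\beta\in[0,1]$. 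Suppose for contradiction that $\alpha,\beta>0$. Then $J_n(v_n^j)>0$ eventually, and summing the two inequalities and dividing by $\norm{u_n}_n^2=s_nJ_n(u_n)^{2/p}$ yields
\begin{equation*}
  \alpha_n^{2/p}+\beta_n^{2/p}\le 1+o(1),
\end{equation*}
which in the limit contradicts the strict subadditivity $\alpha^{2/p}+\beta^{2/p}>(\alpha+\beta)^{2/p}=1$ (holding because $0<2/p<1$ makes $t\mapsto t^{2/p}$ strictly concave on $[0,\infty)$ with value $0$ at $0$). Hence exactly one of $\alpha,\beta$ vanishes; say $\beta=0$ and $\alpha=1$. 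The inf inequality then also forces $\norm{v_n^1}_n^2=\norm{u_n}_n^2+o(\norm{u_n}_n^2)$, so $\norm{v_n^2}_n^2=o(\norm{u_n}_n^2)$; together with the first-step concentration on $U_\eps$ (and a routine diagonal argument to make the subsequence $\eps$-independent), this yields \eqref{eq41} at $x_1$ for every $\eps>0$.

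The main technical obstacle is the bookkeeping in the splitting step: both energy decompositions must hold with errors that are truly $o$ of the full norms, which requires combining the $H^1$-concentration of $w_n$ on $U_\eps$ (to handle the annular gradient terms) with the strong $L^2_{\rmloc}$-convergence $w_n\to 0$ (to handle the $u_n\nabla\chi_j$ cross terms). Once this is in place, the strict subadditivity of $t^{2/p}$ is the clean algebraic ingredient that prohibits mass splitting and forces concentration at a single point.
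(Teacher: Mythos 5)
Your proposal is correct and follows essentially the same route as the paper: generalize the Section~\ref{sec:conc-h1-norm} concentration results to $U_\eps=B_\eps(x_1)\cup B_\eps(x_2)$, split $u_n$ with cutoffs supported near $x_1$ and $x_2$, and use the ground-state (minimality) property of $u_n$ together with the strict subadditivity of $t\mapsto t^{2/p}$ to exclude splitting of $J_n$ between the two points. The remaining differences (working with the ratios $\alpha_n,\beta_n$ instead of normalizing $J_n(u_n)=1$, summing the two infimum inequalities rather than the paper's minimum-quotient chain, and absorbing the possibly negative $J_n(v_n^j)$ case into the argument that $\alpha,\beta\in[0,1]$ rather than treating it separately) are purely organizational.
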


\begin{remark} \label{rem42} Note that in view of the obvious
  modification of Theorem~\ref{thm24} the limits in \eqref{eq41}
  are 0 if $\Omega\setminus B_\eps(x_j)$ is replaced by
  $\Omega\setminus(B_\eps(x_1)\cup B_\eps(x_2))$. So if $j=1$ in
  \eqref{eq41}, then concentration occurs at $x_1$ and if $j=2$,
  it occurs at $x_2$.
\end{remark}

\begin{proof}[Proof of Theorem \ref{thm41}]
  Renormalizing, we may assume that $J_n(u_n)=\io
  Q_n\abs{u_n}^p\,dx=1$ (then $u_n$ may not be a solution of
  \eqref{eq:21} but we still have $s_n =
  \norm{u_n}^2/J_n(u_n)^{2/p}$). Let $\xi_j\in
  C_0^\infty(\Omega,[0,1])$ be a function such that $\xi_j=1$ on
  $B_{\eps/2}(x_j)$ and $\xi_j=0$ on $\Omega\setminus
  B_{\eps}(x_j)$, $j=1,2$, where $\eps$ is so small that
  $\olB_{\eps}(x_j) \subset \Omega$ and $\olB_{\eps}(x_1)\cap
  \olB_{\eps}(x_2) = \emptyset$. Set $v_n:=\xi_1u_n$,
  $w_n:=\xi_2u_n$, $z_n:=u_n-v_n-w_n$. Since $\supp z_n\subset
  \Omega\setminus(B_{\eps/2}(x_1)\cup B_{\eps/2}(x_2))$ and the
  conclusion of Theorem \ref{thm24} remains valid after an
  obvious modification, we have
  \begin{align*}
    \norm{u_n}_n^2 & = \io(\abs{\nabla u_n}^2 +V_nu_n^2)\,dx \\
    & = \left(\io(\abs{\nabla v_n}^2+V_nv_n^2)\,dx + \io(\abs{\nabla w_n}^2+V_nw_n^2)\,dx \right)(1+o(1)) \\
    & = (\norm{v_n}_n^2+\norm{w_n}_n^2)(1+o(1))
  \end{align*}
  and
  \begin{align*}
    1 = J_n(u_n) \!=\! \io Q_n\abs{u_n}^p\,dx & = \io Q_n\abs{v_n}^p\,dx + \io Q_n\abs{w_n}^p\,dx +o(1) \\
    & = J_n(v_n)+J_n(w_n)+o(1).
  \end{align*}
  Assume first that $\limsup_{n\to\infty}J_n(v_n)\ge 0$ and
  $\limsup_{n\to\infty}J_n(w_n)\ge 0$. Then, passing to a
  subsequence, $J_n(v_n)\to c_0\in[0,1]$ and $J_n(w_n)\to
  1-c_0\in [0,1]$. Suppose $c_0\in(0,1)$. Since $p>2$, for $n$
  large enough we have
  \begin{align*}
    s_n & = \frac{\norm{u_n}_n^2}{J_n(u_n)^{2/p}}  = \frac{(\norm{v_n}_n^2+\norm{w_n}_n^2)(1+o(1))}{(J_n(v_n)+J_n(w_n)+o(1))^{2/p}} \\
    & >
    \frac{\norm{v_n}_n^2+\norm{w_n}_n^2}{J_n(v_n)^{2/p}+J_n(w_n)^{2/p}}\ge
    \min\left\{\frac{\norm{v_n}_n^2}{J_n(v_n)^{2/p}},
      \frac{\norm{w_n}_n^2}{J_n(w_n)^{2/p}}\right\} \ge s_n,
  \end{align*}
  a contradiction. So $c_0=0$ or 1. If $c_0=1$ (say), then the
  second limit in (\ref{eq41}) is 0 for $j=1$ because $\supp v_n
  \subset B_\eps(x_1)$. Also the first limit is 0 since otherwise
  $\norm{w_n}_n^2/\norm{v_n}_n^2$ is bounded away from 0 for large $n$,
  and we obtain
  \begin{equation} \label{eq42} s_n =
    \frac{(\norm{v_n}_n^2+\norm{w_n}_n^2)(1+o(1))}{(J_n(v_n)+J_n(w_n)+o(1))^{2/p}}
    > \frac{\norm{v_n}_n^2}{J_n(v_n)^{2/p}} \ge s_n,
  \end{equation} 
  a contradiction again.

  Finally, suppose $\limsup_{n\to\infty}J_n(w_n) < 0$ (the case
  $\limsup_{n\to\infty}J_n(v_n) < 0$ is of course
  analogous). Passing to a subsequence, $J_n(w_n)\le -\eta$ for
  some $\eta>0$ and all $n$ large enough. Then \eqref{eq42} holds
  for such $n$ because $ J_n(v_n) > J_n(v_n)+J_n(w_n)+o(1)$.
\end{proof}

\bibliographystyle{amsplain-abbrv.bst} \bibliography{areiszu-1}

\def\cprime{$'$} \def\polhk#1{\setbox0=\hbox{#1}{\ooalign{\hidewidth
  \lower1.5ex\hbox{`}\hidewidth\crcr\unhbox0}}}
\providecommand{\bysame}{\leavevmode\hbox to3em{\hrulefill}\thinspace}
\providecommand{\MR}{\relax\ifhmode\unskip\space\fi MR }
\providecommand{\MRhref}[2]{%
  \href{http://www.ams.org/mathscinet-getitem?mr=#1}{#2}
}
\providecommand{\href}[2]{#2}
\begin{thebibliography}{10}

\bibitem{MR1675283}
A.~Ambrosetti, D.~Arcoya, and J.L. G{\'a}mez, \emph{Asymmetric bound states of
  differential equations in nonlinear optics}, Rend. Sem. Mat. Univ. Padova
  \textbf{100} (1998), 231--247. \MR{1675283 (99m:34103)}

\bibitem{MR1226934}
C.~Bandle and M.~Marcus, \emph{``{L}arge'' solutions of semilinear elliptic
  equations: existence, uniqueness and asymptotic behaviour}, J. Anal. Math.
  \textbf{58} (1992), 9--24, Festschrift on the occasion of the 70th birthday
  of Shmuel Agmon. \MR{1226934 (94c:35081)}

\bibitem{MR96i:35033}
H.~Berestycki, I.~Capuzzo-Dolcetta, and L.~Nirenberg, \emph{Variational methods
  for indefinite superlinear homogeneous elliptic problems}, NoDEA Nonlinear
  Differential Equations Appl. \textbf{2} (1995), no.~4, 553--572.
  \MR{96i:35033}

\bibitem{MR2143511}
D.~Bonheure, J.M. Gomes, and P.~Habets, \emph{Multiple positive solutions of
  superlinear elliptic problems with sign-changing weight}, J. Differential
  Equations \textbf{214} (2005), no.~1, 36--64. \MR{2143511 (2006e:35082)}

\bibitem{MR592099}
H.~Br{\'e}zis and L.~V{\'e}ron, \emph{Removable singularities for some
  nonlinear elliptic equations}, Arch. Rational Mech. Anal. \textbf{75} (1980),
  no.~1, 1--6. \MR{592099 (83i:35071)}

\bibitem{Buryak200263}
A.V. Buryak, P.D. Trapani, D.V. Skryabin, and S.~Trillo, \emph{Optical solitons
  due to quadratic nonlinearities: from basic physics to futuristic
  applications}, Physics Reports \textbf{370} (2002), no.~2, 63 -- 235.

\bibitem{MR1861096}
D.G. Costa and H.~Tehrani, \emph{Existence of positive solutions for a class of
  indefinite elliptic problems in {$\mathbb{R}\sp N$}}, Calc. Var. Partial
  Differential Equations \textbf{13} (2001), no.~2, 159--189. \MR{2002g:35054}

\bibitem{PhysRevA.83.033828}
N.~Dror and B.A. Malomed, \emph{Solitons supported by localized nonlinearities
  in periodic media}, Phys. Rev. A \textbf{83} (2011), 033828.

\bibitem{MR737190}
D.~Gilbarg and N.S. Trudinger, \emph{Elliptic partial differential equations of
  second order}, second ed., Grundlehren der Mathematischen Wissenschaften
  [Fundamental Principles of Mathematical Sciences], vol. 224, Springer-Verlag,
  Berlin, 1983. \MR{MR737190 (86c:35035)}

\bibitem{MR2531172}
P.M. Gir{\~a}o and J.M. Gomes, \emph{Multibump nodal solutions for an
  indefinite superlinear elliptic problem}, J. Differential Equations
  \textbf{247} (2009), no.~4, 1001--1012. \MR{MR2531172}

\bibitem{RevModPhys.83.247}
Y.V. Kartashov, B.A. Malomed, and L.~Torner, \emph{Solitons in nonlinear
  lattices}, Rev. Mod. Phys. \textbf{83} (2011), 247--305.

\bibitem{MR1615999}
J.~L{\'o}pez-G{\'o}mez, \emph{Varying bifurcation diagrams of positive
  solutions for a class of indefinite superlinear boundary value problems},
  Trans. Amer. Math. Soc. \textbf{352} (2000), no.~4, 1825--1858. \MR{1615999
  (2000i:58019)}

\bibitem{Pendry23062006}
J.B. Pendry, D.~Schurig, and D.R. Smith, \emph{Controlling electromagnetic
  fields}, Science \textbf{312} (2006), no.~5781, 1780--1782.

\bibitem{shalaev_optical_2007}
V.M. Shalaev, \emph{Optical negative-index metamaterials}, Nat Photon
  \textbf{1} (2007), no.~1, 41--48.

\bibitem{Smith06082004}
D.R. Smith, J.B. Pendry, and M.C.K. Wiltshire, \emph{Metamaterials and negative
  refractive index}, Science \textbf{305} (2004), no.~5685, 788--792.

\bibitem{MR519654}
W.A. Strauss, \emph{The nonlinear {S}chr\"odinger equation}, Contemporary
  developments in continuum mechanics and partial differential equations
  ({P}roc. {I}nternat. {S}ympos., {I}nst. {M}at., {U}niv. {F}ed. {R}io de
  {J}aneiro, {R}io de {J}aneiro, 1977), North-Holland Math. Stud., vol.~30,
  North-Holland, Amsterdam, 1978, pp.~452--465. \MR{519654 (81i:35047)}

\bibitem{MR960098}
C.A. Stuart, \emph{Bifurcation in {$L^p({\bf R}^N)$} for a semilinear elliptic
  equation}, Proc. London Math. Soc. (3) \textbf{57} (1988), no.~3, 511--541.
  \MR{960098 (89k:35033)}

\bibitem{MR1079182}
\bysame, \emph{Self-trapping of an electromagnetic field and bifurcation from
  the essential spectrum}, Arch. Rational Mech. Anal. \textbf{113} (1991),
  no.~1, 65--96. \MR{1079182 (91j:78010)}

\bibitem{MR1245069}
\bysame, \emph{Guidance properties of nonlinear planar waveguides}, Arch.
  Rational Mech. Anal. \textbf{125} (1993), no.~2, 145--200. \MR{1245069
  (94j:78022)}

\bibitem{MR2361577}
\bysame, \emph{Existence and stability of {TE} modes in a stratified non-linear
  dielectric}, IMA J. Appl. Math. \textbf{72} (2007), no.~5, 659--679.
  \MR{2361577 (2009a:78007)}

\bibitem{Veselago:1968}
V.G. Veselago, \emph{The electrodynamics of substances with simultaneously
  negative values of $\varepsilon$ and $\mu$}, Physics-Uspekhi \textbf{10}
  (1968), no.~4, 509--514.

\end{thebibliography}
\subsubsection*{Contact information:}
\begin{sloppypar}
  \begin{description}
  \item[Nils Ackermann:] Instituto de Matem\'{a}ticas, Universidad
    Nacional Aut\'{o}noma de M\'{e}xico, Circuito Exterior, C.U.,
    04510 M\'{e}xico D.F., Mexico, email: \texttt{nils@ackermath.info}
  \item[Andrzej Szulkin:] Department of Mathematics, Stockholm
    University, 106 91 Stockholm, Sweden, email: \texttt{andrzejs@math.su.se}
  \end{description}
\end{sloppypar}
\end{document}